\newcommand{\al}{\alpha}
\newcommand{\R}{\mathbb{R}}
\newcommand{\N}{\mathbb{N}}
\newcommand{\lan}{\langle}
\newcommand{\ran}{\rangle}
\newcommand{\La}{\Lambda}
\newcommand{\si}{\sigma}
\newcommand{\de}{\delta}
\newcommand{\ep}{\epsilon}
\newcommand{\pr}{\prime}
\newcommand{\Om}{\Omega}
\newcommand{\Ga}{\Gamma}
\newcommand{\ga}{\gamma}
\newcommand{\ti}{\tilde}
\newcommand{\Length}{\textrm{Length}}
\renewcommand{\baselinestretch}{1.1}
\begin{document}

\newtheorem{theorem}{Theorem}[section]
\newtheorem{proposition}[theorem]{Proposition}
\newtheorem{corollary}[theorem]{Corollary}

\newtheorem{claim}{Claim}

\theoremstyle{remark}
\newtheorem{remark}[theorem]{Remark}

\theoremstyle{definition}
\newtheorem{definition}[theorem]{Definition}

\theoremstyle{plain}
\newtheorem{lemma}[theorem]{Lemma}

\numberwithin{equation}{section}

%
%
%
 \titleformat{\section}
   {\normalfont\bfseries\large}
   {\arabic{section}}
   {12pt}{}
 \titleformat{\subsection}[runin]
   {\normalfont\bfseries}
   {\arabic{section}.\arabic{subsection}}
   {11pt}{}
%
%
%

\pagestyle{headings}
\renewcommand{\headrulewidth}{0.4pt}

\title{\textbf{On the free boundary min-max geodesics}}
\author{Xin Zhou\footnote{The author is partially supported by NSF grant DMS-1406337.}\\}
\maketitle

\pdfbookmark[0]{}{beg}

\renewcommand{\abstractname}{}    
\renewcommand{\absnamepos}{empty} 
\begin{abstract}
\textbf{Abstract:} Given a Riemannian manifold and a closed submanifold, we find a geodesic segment with free boundary on the given submanifold. This is a corollary of the min-max theory which we develop in this article for the free boundary variational problem. In particular, we develop a modified Birkhoff curve shortening process to achieve a strong ``Colding-Minicozzi" type min-max approximation result.
\end{abstract}


\section{Introduction}\label{introduction}

Let $(M^{n}, g)$ be a complete and homogeneously regular Riemannian manifold, and $N^{m}$ a closed submanifold. We consider the problem of finding a geodesic in $M$ with end points on $N$, which meets $N$ orthogonally. This is a variational problem with free boundary conditions. We are interested in the case $\pi_{1}(M, N)=0$, where every curve with end points on $N$ can be shrunk to a point. Hence direct variational method fails in this situation, and we explore the min-max methods. Similar idea was first given by Birkhoff in the 1910s to find closed geodesics on the 2-sphere \cite[V.7]{B} (See \cite[\S 5]{CM11}\cite[\S 2]{Cr88} for more details). In brief, given a sweep-out, i.e. 1-parameter family of closed curves which cover the 2-sphere, Birkhoff developed a curve shortening process to make each slice of the sweep-out as tight as possible, and then obtained a closed geodesic as the limit of slices with maximal length. Since then, more results, such as the existence of multiple geodesics or even infinitely many geodesics, have been studied extensively (c.f. \cite{Kl82, Gr}). Recently, a strong version was given by Colding and Minicozzi \cite{CMM08}, where they found good approximating sweep-outs, such that ``\emph{every curve in the sweepouts with length close to the longest must be close to a closed geodesic}" (see \cite{LW} for a proof using harmonic map flow).

In this paper, we develop a version of min-max methods for free boundary geodesics with a strong approximation result in the sense of Colding-Minicozzi, i.e. ``\emph{every curve in the tightened sweepouts with length close to the longest must be close to a free boundary geodesic}". The existence of a nontrivial free boundary geodesic is then a direct corollary. Such strong property is shared by many min-max constructions of critical points for variational problems, c.f. \cite[\S 12.5]{AF65}\cite[\S 4.3]{P81}\cite[Proposition 3.1]{CD03}\cite[Theorem 1.9]{CMM08}\cite[Theorem 1.14]{CMR08}, and is very useful in other geometric problems, e.g. the proof of finite time extinction of 3-dimensional Ricci flow \cite{CMR08}. 

The existence of geodesic with free boundary was discussed in various special cases before. Weinstein produced free boundary geodesic in a standard ball with Finsler metrics \cite[\S 4]{W}, by embedding the ball to a sphere. Nabutovsky and Rotman \cite{NR} studied the multiple solutions of geodesic loops where the constrained submanifold $N$ is a point. Compared to them, we extend the existence result to the full generality---the total space $M$ and the constraint $N$ can be any manifold and submanifold.

Besides the case of geodesics, min-max methods have been studied widely in high dimensions \cite{P81, Jo89, F00, CD03, CMR08, MN12, L14}. Among them, the free boundary conditions were studied by Fraser \cite{F00} in the case of harmonic disks, and by Jost \cite{Jo89} and Li \cite{L14} in the case of embedded minimal disks.

\vspace{0.5em}
Let us introduce the notations and state the main results here. By the Nash embedding theorem, we can assume that $(M, g)$ is isometrically embedded in some Euclidean space $\R^{N}$. Denote $ds^2_0$ by the Euclidean metric. By scaling, we can assume that
\begin{itemize}
\addtolength{\itemsep}{-0.7em}
\setlength{\itemindent}{1em}

\item[(M1)] $\sup_{M}\|A_{M}\|\leq \frac{1}{16}$, $\sup_{N}\|A_{N}\|\leq \frac{1}{16}$, where $A_{M}$ and $A_{N}$ are the second fundamental forms of the embedding $M\subset\R^{N}$ and $N\subset\R^{N}$ respectively;

\item[(M2)] The injective radius of $M$ is at least $8$, and the curvature of $M$ is at most $\frac{1}{64}$;

\item[(M2)$^{\pr}$] The injective radius of $N$ is at least $4$, and the focal radius of $N$ is at least $4$;

\item[(M3)] For any $x, y\in M$, with $|x-y|_{ds^2_0}\leq 8$, $dist_{M}(x, y)\leq 2|x-y|$;

\item[(M3)$^{\pr}$] For any $x, y\in N$, with $|x-y|_{ds^2_0}\leq 8$, $dist_{N}(x, y)\leq 2|x-y|$.
\end{itemize}

\begin{remark}
If $M$ is compact, then the above constraints can be easily achieved by scaling. When $M$ is noncompact, we can assume that the conditions hold in a large convex domain containing $N$.
\end{remark}

Let $I=[0, 1]$ be the unit interval, and we will work in the Sobolev space $W^{1, 2}(I, M)$, where the $W^{1, 2}$-norm of a map $f: I\rightarrow M\subset\R^{N}$ is given by
$$\|f\|_{W^{1, 2}}^{2}=\int_{[0, 1]}|f(x)|^{2}+|f^{\pr}(x)|^{2}dx.$$
The energy of $f$ is defined by
$$E(f)=\int_{[0, 1]}|f^{\pr}(x)|^2dx.$$

Now we can define the total variational space as follows:
\begin{definition}\label{variational space1}
Let $\Om$ be the space of continuous mappings $\si: [0, 1]\times I\rightarrow M$, such that using coordinates $(t, x)\in [0, 1]\times I$,
\begin{itemize}
\vspace{-5pt}
\addtolength{\itemsep}{-0.7em}
\item $\si(t, \cdot)\in W^{1, 2}(I, M)$, and $\si(t, 0), \si(t, 1)\in N$, for all $t\in[0, 1]$;
\item $t\rightarrow \si(t, \cdot)$ is continuous as a map from $[0, 1]$ to $W^{1, 2}(I, M)$;
\item $\si(0, \cdot), \si(1, \cdot)$ are constant maps. 
\end{itemize}
\vspace{-5pt}
Each $\si$ is called a {\em sweep-out}.
\end{definition}
\begin{remark}
The notion of sweep-out comes from the special case when $(M, \partial M)$ is diffeomorphic to the unit disk $(D, \partial D)$, then we can choose $\si(t)$ to sweep out the whole disk.
\end{remark}

Given $\si_0\in\Om$, we let $[\si_0]$ to be the set of all $\si\in\Om$ which is homotopic to $\si_0$ in $\Om$. Then we can define the \emph{width} associated to $\si_0$ as:
\begin{equation}\label{width}
W=W\big([\si_0]\big)=\inf_{\si\in[\si_0]}\max_{t\in[0, 1]}E\big(\si(t)\big).
\end{equation}
The next result says that the width is always achieved by a geodesic of $M$ with free boundary on $N$.

\begin{theorem}\label{main theorem1}
Given $\si_0\in\Om$, with $W([\si_0])>0$, there exists a nontrivial geodesic $\ga: I\rightarrow M$ with free boundary on $N$, i.e. $\ga(0), \ga(1)\in N$, $\ga^{\pr}(0), \ga^{\pr}(1)\perp N$, and $E(\ga)=W_{[\si_0]}$.
\end{theorem}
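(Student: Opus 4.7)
The plan is to run a min-max scheme in the spirit of Birkhoff and Colding--Minicozzi, adapted to the free boundary condition. Starting from a minimizing sequence $\{\si_k\}\subset[\si_0]$ with $\max_{t}E(\si_k(t,\cdot))\to W$, I would construct a continuous ``curve-shortening map'' $\Psi:\Om\to\Om$ that (i) is homotopic to the identity through energy-nonincreasing homotopies, so that $\Psi(\si)\in[\si_0]$ whenever $\si\in[\si_0]$; (ii) is energy-nonincreasing on each slice; and (iii) enjoys the quantitative property that $E(\si(t,\cdot))-E(\Psi(\si)(t,\cdot))$ is bounded below by a positive modulus of the $W^{1,2}$-distance from $\si(t,\cdot)$ to the set of free boundary geodesics. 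Granting such a $\Psi$, the theorem follows by a standard argument: iterate $\Psi$ on the minimizing sequence, pick $t_k$ achieving (near) the max of the tightened sweep-out, and extract a $W^{1,2}$-subsequential limit, which is then forced to be a free boundary geodesic of energy $W$.

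The technical core is the modified Birkhoff process defining $\Psi$. For each slice $\ga=\si(t,\cdot)$ with $E(\ga)\leq\La$, fix a partition $0=x_0<x_1<\dots<x_k=1$ fine enough that, by Cauchy--Schwarz, the $C^0$-oscillation of $\ga$ on each sub-interval is much smaller than the injectivity radius of $M$. On interior sub-intervals $[x_{i-1},x_i]$ with $2\leq i\leq k-1$, replace $\ga|_{[x_{i-1},x_i]}$ by the unique short geodesic in $M$ joining its endpoints; this is well defined thanks to (M1)--(M3). On the two outer sub-intervals, handle the free boundary as follows: allow the endpoint $\ga(0)$ (resp.\ $\ga(1)$) to vary in a neighborhood on $N$, and replace it by the point $q\in N$ minimizing the energy of the short geodesic in $M$ from $q$ to $\ga(x_1)$ (resp.\ to $\ga(x_{k-1})$); conditions (M2)$^{\pr}$--(M3)$^{\pr}$ make this projection-type step uniquely solvable, smooth in the data, and its first variation vanishes precisely when the adjoining geodesic meets $N$ orthogonally. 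Alternating (or composing) interior replacement and boundary adjustment gives $\Psi$; continuity in $t$ and homotopy invariance follow from the uniqueness of all the local constructions in the chosen normal coordinates.

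Given $\Psi$, the strong approximation is extracted by the usual dichotomy: either an interior sub-interval of $\ga$ is not yet close to its replacement geodesic---in which case replacement yields a definite energy drop quantifiable by the excess arclength versus the chord---or one of the endpoints is not yet near-critical on $N$---in which case the boundary step yields a definite drop controlled by the non-orthogonal component of $\ga^{\pr}$ at $0$ or $1$. Adding these gives $E(\ga)-E(\Psi(\ga))\geq c\cdot\mathrm{dist}_{W^{1,2}}(\ga,\G)^{2}$ where $\G$ is the set of free boundary geodesics of energy $\leq \La$. Applying this to the minimizing sequence $\ti\si_k:=\Psi(\si_k)$ and choosing $t_k$ with $E(\ti\si_k(t_k,\cdot))\to W$, the slices are $W^{1,2}$-bounded and asymptotically near $\G$, so by Arzel\`a--Ascoli and elementary ODE regularity they converge along a subsequence to a genuine free boundary geodesic $\ga$ with $E(\ga)=W>0$, which is nontrivial by the hypothesis $W>0$.

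The hard part is the boundary step of $\Psi$: designing it so that it is continuous in the parameter $t$, preserves the homotopy class, and yields a quantitative energy decrease whenever $\ga^{\pr}(0)$ or $\ga^{\pr}(1)$ fails to be perpendicular to $N$. This is the free boundary analogue of the classical interior step and is what distinguishes the present scheme from the closed geodesic case. The crucial technical ingredient is the focal radius assumption (M2)$^{\pr}$, which guarantees that nearest-point projection onto $N$ and its first variation are well defined and Lipschitz on a uniform tubular neighborhood---an estimate one must use repeatedly to prove both the energy drop and the $W^{1,2}$-continuity of $\Psi$.
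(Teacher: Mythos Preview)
Your plan is essentially the paper's approach: a Birkhoff-type curve shortening map $\Psi$ built from interior geodesic replacement plus, on the two end intervals, replacement by the minimizing geodesic from the interior break point to $N$; fixed points of $\Psi$ are free boundary geodesics, and near-maximal slices of the tightened sweepouts are then shown to be close to one. You also correctly identify the free-boundary convexity estimate (the paper's Lemma~\ref{convexity1}) as the new technical ingredient, and the role of (M2)$^{\pr}$ in making the boundary step well defined.

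One point to flag: the inequality $E(\ga)-E(\Psi(\ga))\geq c\,\mathrm{dist}_{W^{1,2}}(\ga,G)^2$ does not follow from the local dichotomy you describe. What the interior and boundary replacement estimates actually give (Lemmas~\ref{convexity0} and~\ref{convexity1}) is $\mathrm{dist}^2(\ga,\Psi(\ga))\leq C\big(E(\ga)-E(\Psi(\ga))\big)$, i.e.\ control of the distance to $\Psi(\ga)$, not to $G$. A curve can be close to each of its local replacements yet far from $G$ because of corners at the break points; this is why the paper alternates replacement on even and odd sub-intervals, so that fixed points of $\Psi$ are genuinely smooth (Property~(4)). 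To pass from ``small energy drop'' to ``close to $G$'' (Property~(5)) the paper does not prove a direct quadratic bound but argues by contradiction, using that the space $\La$ of broken geodesics with at most $L$ pieces is compact (each element is determined by finitely many points in $M$) together with continuity of $\Psi$. Your Arzel\`a--Ascoli step for extracting the limiting geodesic should likewise be replaced by this finite-dimensional compactness of $\La$; once that is inserted, your outline matches the paper and goes through.
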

\begin{remark}
When $N$ bounds a non-contractable disk in $M$,  we can find $\si_0$ with $W([\si_0])>0$ (see the discussion in the beginning of \S \ref{existence}).
\end{remark}

Theorem \ref{main theorem1} is a direct corollary of the following stronger theorem, which says that ``almost maximal implies almost critical".

\begin{theorem}\label{main theorem2}
Given $\si_0\in\Om$, with $W([\si_0])=W_0>0$, then there exists a sequence of sweep-outs $\{\ga_j\}_{j\in\N}\subset [\si_0]$, with
\vspace{-5pt}
$$\lim_{j\rightarrow\infty}\max_{t\in[0, 1]}E\big(\ga_j(t)\big)=W_0,$$
\vspace{-5pt}
such that, for any $\ep>0$, there exists a $\de>0$, and if $j>\frac{1}{\de}$, and if for some $t_0\in [0, 1]$
\begin{equation}\label{almost maximal}
E\big(\ga_j(t_0)\big)>W_0-\de,
\end{equation}
then $dist(\ga_j(t_0), G)<\ep$, where $G$ is the space of immersed geodesics with free boundary on $N$.
\end{theorem}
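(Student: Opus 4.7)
The plan is to follow the Colding--Minicozzi paradigm: construct a continuous map $\Psi : \Om \to \Om$ that does not increase energy, is homotopic to the identity (so it preserves the homotopy class $[\si_0]$), and has the property that its near-fixed points are close to free boundary geodesics. Applied to any minimizing sequence of sweep-outs, this map will then produce the improved sequence $\{\ga_j\}$ demanded by the theorem. The novelty is that $\Psi$ must respect the free boundary condition on $N$, which forces a modification of the classical Birkhoff curve shortening process (BCSP).

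The construction of $\Psi$ proceeds as follows. Fix a partition $0 = x_0 < x_1 < \cdots < x_k = 1$ whose mesh is small compared to the injectivity and focal radii from (M2), (M2)$^{\pr}$. On each interior sub-interval $[x_i, x_{i+1}]$ with $1 \le i \le k-2$, replace $\si|_{[x_i, x_{i+1}]}$ by the unique short minimizing geodesic from $\si(x_i)$ to $\si(x_{i+1})$. On the two boundary sub-intervals, replace $\si|_{[0, x_1]}$ by the minimizing geodesic from the nearest-point projection of $\si(x_1)$ onto $N$ to $\si(x_1)$, and similarly at $x = 1$; conditions (M1), (M2)$^{\pr}$, (M3)$^{\pr}$ guarantee this projection is well-defined and smooth on the tubular neighborhood where the curves live. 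Iterating with a shifted partition and interpolating, as in the classical BCSP, yields a map with four properties: (i) continuity on $\Om$ in the $W^{1, 2}$ topology, (ii) homotopic to the identity through $\Om$, (iii) energy non-increasing, (iv) fixed points are exactly the smooth free boundary geodesics.

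The heart of the proof, and the main obstacle, is the quantitative statement: for every $\ep > 0$ there exists $\eta > 0$ such that whenever $\si \in \Om$ satisfies $E(\si) \le 2 W_0$ and $E(\si) - E(\Psi(\si)) < \eta$, the curve $\si$ lies within $\ep$ of the space $G$ of free boundary geodesics in the $W^{1, 2}$ norm. I would argue by contradiction and compactness: a violating sequence $\si_n$ has uniformly bounded energy, and smallness of the energy drop forces the $\si_n$ to be close to broken geodesics whose turning angles at interior breakpoints tend to zero (the classical Colding--Minicozzi step) and, crucially, whose boundary segments meet $N$ at angles tending to $\pi/2$. The boundary estimate is the new delicate point: one must convert smallness of the shortening that arises from perpendicular projection to $N$ into smallness of the oblique boundary angle, using the focal radius and second fundamental form bounds (M1), (M2)$^{\pr}$ to compare the projected curve against the actual one. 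Standard $W^{1, 2}$-compactness and ODE convergence for geodesics then extract a subsequential limit in $G$, contradicting $\mathrm{dist}_{W^{1, 2}}(\si_n, G) \ge \ep$.

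Finally, starting from any minimizing sequence $\tilde{\ga}_j \in [\si_0]$ with $\max_t E(\tilde{\ga}_j(t)) \to W_0$, set $\ga_j = \Psi^{k_j}(\tilde{\ga}_j)$ for integers $k_j \to \infty$ chosen slowly enough. Properties (i)--(iii) of $\Psi$ give $\ga_j \in [\si_0]$ with $\max_t E(\ga_j(t)) \to W_0$. If (\ref{almost maximal}) holds at some $t_0$, the telescoping total drop $E(\tilde{\ga}_j(t_0)) - E(\ga_j(t_0))$ is at most $\de + o(1)$, so by pigeonhole the one-step drop at some intermediate iterate falls below the threshold $\eta$ of the quantitative lemma, placing that iterate within $\ep/2$ of $G$; uniform continuity of $\Psi^{\ell}$ in the bounded-energy range then transfers $\ep$-closeness back to $\ga_j(t_0)$ for $j > 1/\de$, completing the proof.
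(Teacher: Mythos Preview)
Your overall strategy---modified Birkhoff curve shortening with a free-boundary replacement near $N$, followed by a ``small energy drop implies near-critical'' principle---is the same as the paper's. But two aspects of your execution diverge, and one of them is a genuine gap.

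\textbf{The iteration-and-transfer step does not close.} The paper does \emph{not} iterate $\Psi$. It first deforms each sweep-out slice into the finite-dimensional space $\La$ of broken geodesics (via Steps~1--2), and then applies $\Psi$ \emph{once}: $\ga_j(t)=\Psi(\si_j(t))$. The contradiction argument uses two separate properties in tandem: Property~(3), a convexity estimate bounding $dist^2(\si,\Psi(\si))$ by the drop in length-squared (the free-boundary version is Lemma~\ref{convexity1}, the main new analytic ingredient), and Property~(5), the ``far from $G$ implies definite length drop'' statement. If $\ga_j(t_0)$ were $\ep$-far from $G$ with near-maximal energy, Property~(3) forces $\si_j(t_0)$ to be $\ep/2$-far from $G$, and then Property~(5) forces a definite length drop $\si_j(t_0)\to\ga_j(t_0)$, contradicting the energy sandwich. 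Your scheme instead takes $\ga_j=\Psi^{k_j}(\ti\ga_j)$ with $k_j\to\infty$, pigeonholes to find \emph{some} intermediate iterate $\Psi^m(\ti\ga_j(t_0))$ near $G$, and then asserts that ``uniform continuity of $\Psi^{\ell}$'' carries this closeness forward to $\ga_j(t_0)=\Psi^{k_j}(\ti\ga_j(t_0))$. But $\ell=k_j-m$ is unbounded, and iterated moduli of continuity need not remain controlled; nothing you have said prevents the orbit from drifting away from $G$ over many further steps. (A fix exists---since the \emph{total} drop is small, \emph{every} consecutive drop is small, so one may take $m=k_j-1$ and then invoke a Property~(3)-type estimate to pass from $\Psi^{k_j-1}$ to $\Psi^{k_j}$---but this is exactly the paper's two-property argument in disguise, and makes the iteration and pigeonhole superfluous.)

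\textbf{Compactness on $\Om$ is not available as stated.} Your quantitative lemma is argued ``by contradiction and compactness,'' invoking ``standard $W^{1,2}$-compactness.'' A uniform energy bound on $\Om$ gives only $C^{1/2}$ (hence $C^0$) compactness, not $W^{1,2}$ compactness. The paper avoids this by proving Property~(5) on $\La$, which is compact because a broken geodesic with at most $L-1$ breaks is determined by finitely many points in $M$ and $N$. Your argument could be repaired by first using a convexity estimate (the analogue of Lemma~\ref{convexity1}) to show that a small-drop curve is $W^{1,2}$-close to its broken-geodesic replacement, and then running compactness in the broken-geodesic space; but you have not isolated that estimate, and in the free-boundary setting it is precisely the point that requires work.
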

\begin{remark}
$\ga_j(t)$ will be piecewise geodesic by our construction, so $E\big(\ga_j(t_0)\big)=Length^2\big(\ga_j(t_0)\big)$, hence (\ref{almost maximal}) says that the length of $\ga_j(t_0)$ is almost maximal among the sweep-out $\ga_j(t)$.
\end{remark}

\vspace{0.5em}
The idea is to adapt the Birkhoff's curve shortening process (BCSP) to manifold with a constraint submanifold. Briefly, given a closed curve $\ga$ with $2L$ evenly spaced break points $\{x_0, x_1, \cdots, x_{2L}=x_0\}\subset S^1$, the BCSP first replaces each piece $\ga|_{[x_{2i}, x_{2i+2}]}$ on even intervals by a geodesic segment connecting $\ga(x_{2i})$ with $\ga(x_{2i+2})$, and then repeats the geodesic replacement process on odd intervals $[x_{2i-1}, x_{2i+1}]$. In our case, given a curve $\ga: [0, 1]\rightarrow M$, $\ga(0), \ga(1)\in N$, with $2L+1$ evenly spaced break points $\{x_0=0, x_1, \cdots, x_{2L}=1\}\subset [0, 1]$, we will first replace the boundary piece $\ga|_{[0, x_2]}$ (and $\ga|_{[x_{2L-2}, 1]}$) by geodesic segment $\ti{\ga}$ connecting $\ga(x_2)$ (and $\ga(x_{2L-2})$) to N, and then do replacements on inner pieces $\ga|_{[x_{2i}, x_{2i+2}]}$, $i\neq 0, L-1$ and $\ga|_{[x_{2i-1}, x_{2i+1}]}$ as BCSP.
Our modified BCSP satisfies properties analogous to BSCP (\S \ref{curve shortening}). Among them, one key ingredient is to show that the $W^{1, 2}$-norm difference $\|\ga|_{[0, x_2]}-\ti{\ga}\|_{1, 2}$ is controlled by the length difference $\Length(\ga|_{[0, x_2]})-\Length(\ti{\ga})$ (Lemma \ref{convexity1}). This is achieved by certain convexity estimates, which will be useful in other free boundary variational problems.

The paper is organized as follows. In \S \ref{preliminary}, we collect several preliminary results. In \S \ref{curve shortening}, we introduce the modified curve shortening process and several key properties. In \S \ref{existence}, we apply the curve shortening process to sweep-outs and finish the proof.

\vspace{5pt}
{\renewcommand\baselinestretch{1.0}\selectfont
\noindent\textbf{Acknowledgement:} {\small This paper is based upon work supported by the National Science Foundation under Grant No. 0932078 000, while the author was visiting the Mathematical Science Research Institute in Berkeley, California, during the Fall semester of 2013. The author would like to than Rick Schoen, Toby Colding and Bill Minicozzi for discussions.
}
\par}


\section{Preliminary results}\label{preliminary}

Now we summarize several preliminary results in this section. The first fact is that the $W^{1, 2}$-norm bounds imply the H\"{o}lder continuity, i.e. given $x, y\in I$, using the Cauchy-Schwartz inequality,
\begin{equation}\label{Holder continuity}
|f(x)-f(y)|^{2}\leq\big(\int_{x}^{y}|f^{\pr}|\big)^{2}\leq |x-y|\int_{I}|f^{\pr}|^{2}.
\end{equation}
Thus $f$ is in $C^{\frac{1}{2}}$ if $f$ is in $W^{1, 2}$, and the $C^{\frac{1}{2}}$-norm is bounded by the $W^{1, 2}$-norm.

The second important result is the \emph{Wirtinger inequality} and the \emph{modified Wirtinger inequality}. Let us first state the Wirtinger inequality, which was used a lot in studying the existence of closed geodesics by Colding and Minicozzi \cite[page 165]{CM11}. Given $L>0$, let $f\in W^{1, 2}([0, L], \R)$, with $f(0)=f(L)=0$, then
$$\int_{0}^{L}\big(f(x)\big)^{2}dx\leq\frac{L^{2}}{\pi^{2}}\int_{0}^{L}|f^{\pr}(x)|^{2}dx.$$
For the modified Wirtinger inequality, we want similar estimates, while only assuming that the function is zero at one of the boundary points of $[0, L]$. Precisely, we have
\begin{lemma}\label{modified W inequality}
Let $f\in W^{1, 2}([0, L], \R)$, with $f(0)=0$, then
$$\int_{0}^{L}\big(f(x)\big)^{2}dx\leq\frac{L^{2}}{2}\int_{0}^{L}|f^{\pr}(x)|^{2}dx.$$
\end{lemma}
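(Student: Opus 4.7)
The plan is to prove this by a direct application of the fundamental theorem of calculus together with the Cauchy--Schwarz inequality, rather than going through the variational/eigenvalue approach that one would use to obtain a sharp constant. Since the stated constant $L^2/2$ is not optimal (the sharp constant being $4L^2/\pi^2$, coming from the first eigenvalue of $-f'' = \lambda f$ with $f(0)=0$ and $f'(L)=0$), there is no need to appeal to spectral theory, and an elementary argument suffices.

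First I would note that by density, it is enough to prove the inequality for smooth $f$. Because $f(0) = 0$, the fundamental theorem of calculus gives $f(x) = \int_0^x f'(t)\,dt$ for every $x \in [0,L]$. Applying the Cauchy--Schwarz inequality to this representation yields
$$ f(x)^2 \;\leq\; \Bigl(\int_0^x 1 \cdot |f'(t)|\,dt\Bigr)^{2} \;\leq\; x \int_0^x |f'(t)|^2\,dt \;\leq\; x \int_0^L |f'(t)|^2\,dt, $$
which is the same pointwise estimate as in the $C^{1/2}$ bound (\ref{Holder continuity}) but with the base point fixed at $0$.

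Integrating this pointwise bound over $x \in [0,L]$ gives
$$ \int_0^L f(x)^2\,dx \;\leq\; \Bigl(\int_0^L x\,dx\Bigr)\Bigl(\int_0^L |f'(t)|^2\,dt\Bigr) \;=\; \frac{L^2}{2}\int_0^L |f'(t)|^2\,dt, $$
which is exactly the claimed inequality. There is really no main obstacle here; the only subtle point is that the argument exploits the one-sided boundary condition by bounding $x \leq L$ in the integrated factor, which is precisely where the factor $L^2/2$ (as opposed to $L^2/\pi^2$ in the two-sided Wirtinger case) emerges. For the intended applications in Section \ref{curve shortening}, this non-sharp constant is sufficient.
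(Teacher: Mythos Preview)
Your proof is correct and follows essentially the same approach as the paper's own proof: both use the fundamental theorem of calculus to write $f(x)=\int_0^x f'(t)\,dt$, apply Cauchy--Schwarz to get the pointwise bound $|f(x)|^2\le x\int_0^L|f'|^2$, and then integrate in $x$. The only additions in your write-up are the remark on density and the commentary about the sharp constant, neither of which changes the argument.
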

\begin{proof}
Using the fundamental theorem of calculus, $f(x)=\int_{0}^{x}f^{\pr}(s)ds$. By the Cauchy-Schwartz inequality,
$$|f(x)|^{2}\leq \big(\int_{0}^{x}|f^{\pr}(s)|ds\big)^{2}\leq x\int_{0}^{L}|f^{\pr}(s)|^{2}ds.$$
So
$$\int_{0}^{L}|f(x)|^{2}dx\leq \int_{0}^{L}x\big(\int_{0}^{L}|f^{\pr}(s)|^{2}ds\big)dx=\frac{L^{2}}{2}\int_{0}^{L}|f^{\pr}(s)|^{2}ds.$$
\end{proof}

The last fact is as follows. It appears in \cite[Lemma 5.2]{CM11}, and we need a version with better constants. It just comes out as a careful reproof of \cite[Lemma 5.2]{CM11}, and we give the details here for completeness.
\begin{lemma}\label{length estimate of normal component}
If $x, y\in M$, then $|(x-y)^{\perp}|\leq\frac{1}{8}|x-y|^{2}$, where $(x-y)^{\perp}$ is the normal component of $(x-y)$ to $M$ at $y$ in $\R^N$. Similar inequality also holds when $x, y\in N$, and when we take $(x-y)^{\perp}$ to be the normal component of $(x-y)$ to $N$ at $y$ in $\R^N$.
\end{lemma}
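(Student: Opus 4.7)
The plan is to integrate along a minimizing geodesic of $M$ from $y$ to $x$ and exploit the fact that along such a geodesic the Euclidean acceleration is controlled by the second fundamental form of $M$ in $\R^{N}$. Since $(M, g)$ is complete, Hopf--Rinow supplies a unit-speed minimizing geodesic $\ga: [0, L] \to M$ with $\ga(0) = y$, $\ga(L) = x$, and $L = dist_{M}(x, y)$.

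First I would dispose of the trivial case $|x - y| \geq 8$: there $|(x - y)^{\perp}| \leq |x - y| \leq \tfrac{1}{8}|x - y|^{2}$, and we are done. So we may assume $|x - y| \leq 8$, and (M3) yields $L \leq 2|x - y|$.

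Next, writing $x - y = \int_{0}^{L} \ga^{\pr}(s)\,ds$ in $\R^{N}$ and letting $P$ denote orthogonal projection onto $(T_{y} M)^{\perp}$, the identity $P(\ga^{\pr}(0)) = 0$ (since $\ga^{\pr}(0) \in T_{y}M$) gives
$$(x - y)^{\perp} = \int_{0}^{L} P\big(\ga^{\pr}(s) - \ga^{\pr}(0)\big)\,ds.$$
Because $\ga$ is an intrinsic geodesic of unit speed, its Euclidean acceleration equals $A_{M}(\ga^{\pr}, \ga^{\pr})$, so by (M1) one has $|\ga^{\pr\pr}(s)| \leq \|A_{M}\| \leq \tfrac{1}{16}$. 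Integrating in $s$ yields $|\ga^{\pr}(s) - \ga^{\pr}(0)| \leq s/16$, and therefore
$$|(x - y)^{\perp}| \leq \int_{0}^{L} \frac{s}{16}\,ds = \frac{L^{2}}{32} \leq \frac{4|x - y|^{2}}{32} = \frac{|x - y|^{2}}{8}.$$

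The case $x, y \in N$ goes the same way: use the bound on $\|A_{N}\|$ from (M1), completeness of $N$ (which is closed in $M$) for the existence of a minimizing geodesic in $N$, and (M3)$^{\pr}$ in place of (M3). There is no serious obstacle here---the proof is essentially a careful accounting of constants. The only thing to watch is the dichotomy at $|x-y| = 8$ that lets us invoke (M3)/(M3)$^{\pr}$, after which the $1/16$ from (M1) combines with the factor of $4$ from (M3) to land exactly on the stated $1/8$.
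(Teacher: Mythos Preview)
Your proof is correct and follows essentially the same approach as the paper: handle the trivial case $|x-y|\geq 8$, then integrate along a minimizing unit-speed geodesic from $y$ to $x$, use that its Euclidean acceleration is bounded by $\sup_M|A_M|\leq\tfrac{1}{16}$, and combine with $L\leq 2|x-y|$ from (M3). The only cosmetic difference is that the paper pairs against the unit vector $V=(x-y)^\perp/|(x-y)^\perp|$ rather than using the full projection $P$, but the estimate is identical.
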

\begin{proof}
If $|x-y|\geq 8$, then it is trivially true. Assume now that $|x-y|\leq 8$. Choose $\al: [0, l]\rightarrow M$ as the minimizing unit speed geodesic from $y$ to $x$ in $M$ (which exists by (M2)), hence $l=dist_{M}(x, y)\leq 2|x-y|$ by (M3). Let $V$ be the unit normal vector
$$V=\frac{(x-y)^{\perp}}{|(x-y)^{\perp}|},$$
so $\lan\al^{\pr}(0), V\ran=0$. Hence
\begin{equation}\label{estimate of the perpendicular part}
\begin{split}
|(x-y)^{\perp}| & =\lan(x-y), V\ran=\int_{0}^{l}\lan\al^{\pr}(s), V\ran ds=\int_{0}^{l}\lan\al^{\pr}(0)+\int_{0}^{s}\al^{\pr\pr}(t)dt, V\ran ds,\\
                         & \leq \int_{0}^{l}\int_{0}^{s}|\al^{\pr\pr}(t)|dtds\leq\int_{0}^{l}\int_{0}^{s}|A_M(\al(t))||\al^{\pr}(t)|^2dtds,\\
                         & \leq \frac{1}{2}l^{2}\sup_{M}|A_M|\leq \frac{1}{8}|x-y|^{2}.
\end{split}
\end{equation}
The same proof works for $N$, as it satisfies the same properties as $M$ as embedded submanifolds of $\R^{N}$.
\end{proof}


\section{Curve shortening process}\label{curve shortening}

Take $L\in\N$ to be a large integer.
Let $\La$ be the space of piecewise linear maps\footnote{By linear map, we mean a (constant speed) geodesic.} $\ga: I\rightarrow M$ parametrized proportional to the arc-length with no more than $L-1$ break points, such that $\ga(0), \ga(1)\in N$, and each geodesic segment has length at most $1$, with Lipschitz bound $L$. Denote $G\subset\La$ to be the set of immersed geodesics with free boundary lying on $N$.
We will use the distance and topology on $\La$ given by the $W^{1, 2}$-norms on $W^{1, 2}(I, M)$.

In this section, we will construct the modified Birkhoff curve shortening map $\Psi: \La\rightarrow\La$, so that the following properties are satisfied\footnote{Similar map and properties appear in \cite[page 165]{CM11} in the case of closed curves.}:
\begin{itemize}
\vspace{-5pt}
\addtolength{\itemsep}{-0.7em}
\setlength{\itemindent}{1em}
\item[(1)] $\Psi(\ga)$ depends on $\ga$ continuously;
\item[(2)] $\Psi(\ga)$ is homotopic to $\ga$, and Length$\big(\Psi(\ga)\big)\leq$Length$(\ga)$;
\item[(3)] There is a continuous function $\phi: [0, \infty)\rightarrow [0, \infty)$, with $\phi(0)=0$, such that,
$$dist^{2}\big(\ga, \Psi(\ga)\big)\leq\phi\big(\frac{\textrm{Length}^{2}(\ga)-\textrm{Length}^{2}\big(\Psi(\ga)\big)}{\textrm{Length}^{2}\big(\Psi(\ga)\big)}\big);$$
\item[(4)] If $\Psi(\ga)=\ga$, then $\ga\in G$, i.e. fixed points of $\Psi$ are immersed geodesics with free boundary lying on $N$; 
\item[(5)] Given $\ep>0$, there exists $\de>0$, such that if $\ga\in\La$, and $dist(\ga, G)\geq\ep$, then
$$\textrm{Length}\big(\Psi(\ga)\big)\leq \textrm{Length}(\ga)-\de.$$
\end{itemize}


\subsection{Defining $\Psi$.}\label{defining Psi}
Fix a partition of $I=[0, 1]$ by choosing $2L-1$ successive evenly spaced break points:
$$x_{0}=0, x_{1}, x_{2}, \cdots, x_{2L}=1\in [0, 1],$$
such that $|x_{j+1}-x_{j}|=\frac{1}{2L}$. Similarly to that in \cite{CM11}, $\Psi$ is defined by four steps:
\begin{itemize}
\vspace{-5pt}
\addtolength{\itemsep}{-0.7em}
\setlength{\itemindent}{2em}
\item[\textbf{Step 1.}] Replace $\ga$ on the boundary even intervals $[0, x_2]$ and $[x_{2L-2}, 0]$ by the minimizing geodesics from $\ga(x_2)$ and $\ga(x_{2L-2})$ to $N$ respectively\footnote{Such geodesics exist as $\ga(x_2)$ and $\ga(x_{2L-2})$ lie within the boundary cut locus by (M2)$^\pr$.}, and then replace $\ga$ on each inner even interval $[x_{2j}, x_{2j+2}]$ by the linear map with the same endpoints to get a piecewise linear map $\ga_e: [0, 1]\rightarrow M$.
\item[\textbf{Step 2.}] Reparametrize $\ga_e$ to get a constant speed curve $\ti{\ga}_e$, i.e. $\ti{\ga}_e$ is parametrized proportional to the arc length. 
\item[\textbf{Step 3.}] Denote $\ti{x}_j$ to be the image of $x_j$ under this reparametrization, i.e. $\ga_e(x_j)=\ti{\ga}_e(\ti{x}_j)$. Replace $\ti{\ga}_e$ on each odd interval $[\ti{x}_{2j-1}, \ti{x}_{2j+1}]$ by the linear map with the same endpoints to get a piecewise linear map $\ga_o: [0, 1]\rightarrow M$.
\item[\textbf{Step 4.}] Reparametrize $\ga_o$ to get a constant speed curve $\ti{\ga}_o$, which is then $\Psi(\ga)$.
\end{itemize}

In fact, each of the steps is energy non-increasing. The linear replacements obviously reduce the energy, as the linear maps (with both fixed endpoints, or with one endpoint fixed and the other free on $N$) minimize energy. The reparametrizations reduce the energy because of the Cauchy-Schwartz inequality, since for a map $\ga: [0, 1]\rightarrow M$,
$${\rm Length}^2(\ga)\leq E(\ga),$$
where equality holds if and only if $\ga$ has constant speed almost everywhere. Now we will prove these properties of $\Psi$ in the following.


\subsection{Property (3) of $\Psi$.}

Using the triangle inequality, we only need to bound $dist(\ga, \ga_e)$ and $dist(\ga_e, \ti{\ga}_e)$ by
the the difference of their length square (as well as those in Step 3 and Step 4). In \cite{CMM08, CM11}, Colding and Minicozzi showed that the $W^{1, 2}$-distance of two curves $\si, \ti{\si}$ can be bounded by the difference of their length square when $\ti{\si}$ is a linear map with the same endpoints as $\si$, and they also showed analogous bound for the reparametrization as Step 2 and Step 4. Hence the bounds of $dist(\ti{\ga}_e, \ga_o)$ and $dist(\ga_o, \ti{\ga}_o)$ are almost the same as those in \cite[\S 3.2]{CM11}, except that the parameter space is $[0, 1]$ instead of $S^1$.
The only estimate left in our case is to find similar bound of $dist(\si, \ti{\si})$ when $\si$ has one endpoint on $N$ and $\ti{\si}$ is the minimizing geodesic from the other endpoint of $\si$ to $N$. Particularly,
\begin{lemma}\label{convexity0}
(\cite[Lemma 5.1]{CM11}) There exists $C$ so that if $I$ is an interval of length at most $\frac{1}{L}$, $\si_1: I\rightarrow M$ is a Lipschitz curve with $|\si_1^{\pr}|\leq L$, and $\si_2: I\rightarrow M$ is the minimizing geodesic with the same endpoints, then
\begin{equation}\label{convexity inequality0}
dist^2(\si_1, \si_2)\leq C\big(E(\si_1)-E(\si_2)\big).
\end{equation}
\end{lemma}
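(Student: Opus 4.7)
The plan is to work in the ambient Euclidean space $\R^N$ and set $\eta := \si_1 - \si_2$, exploiting two features: first, $\eta$ vanishes at both endpoints of $I$ since $\si_1$ and $\si_2$ share endpoints; and second, the acceleration $\si_2^{\pr\pr}$ of the geodesic $\si_2$ is normal to $M$ at $\si_2(x)$. Because $\eta$ vanishes at $\partial I$, the standard Wirtinger inequality gives $\int_I |\eta|^2 \leq \frac{|I|^2}{\pi^2}\int_I |\eta^{\pr}|^2$; combined with $|I|\leq 1/L\leq 1$ this reduces the problem to showing $\int_I |\eta^{\pr}|^2 \leq C\bigl(E(\si_1)-E(\si_2)\bigr)$, since $\|\eta\|_{W^{1,2}}^2$ is then controlled by a constant multiple of $\int_I |\eta^{\pr}|^2$.

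Expanding the square and integrating by parts, using that the boundary term $[\lan \eta, \si_2^{\pr}\ran]_{\partial I}$ vanishes,
\begin{equation*}
\int_I |\eta^{\pr}|^2 = E(\si_1)+E(\si_2) - 2\int_I \lan \si_1^{\pr}, \si_2^{\pr}\ran = \bigl(E(\si_1)-E(\si_2)\bigr) + 2\int_I \lan \eta, \si_2^{\pr\pr}\ran.
\end{equation*}
So the whole argument reduces to absorbing $2\int_I \lan \eta, \si_2^{\pr\pr}\ran$ into the left-hand side with a coefficient strictly less than $1$. This is where the geodesic structure enters.

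Since $\si_2$ is a geodesic in $M$, the acceleration $\si_2^{\pr\pr}$ in $\R^N$ is purely normal to $M$ at $\si_2(x)$, so $\lan \eta, \si_2^{\pr\pr}\ran = \lan \eta^{\perp}, \si_2^{\pr\pr}\ran$, where $\eta^{\perp}$ is the component of $\si_1(x)-\si_2(x)$ normal to $M$ at $\si_2(x)$. Lemma \ref{length estimate of normal component} gives $|\eta^{\perp}|\leq \frac{1}{8}|\eta|^2$, while (M1) gives $|\si_2^{\pr\pr}|\leq \sup_M\|A_M\|\cdot |\si_2^{\pr}|^2 \leq \frac{1}{16}c^2$, where $c$ is the constant speed of $\si_2$. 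Since $c\,|I|=\Length(\si_2)\leq \Length(\si_1)\leq L\cdot|I|\leq 1$, chaining with the Wirtinger estimate yields
\begin{equation*}
\Bigl| 2\int_I \lan \eta, \si_2^{\pr\pr}\ran \Bigr| \leq \frac{c^2}{64}\int_I |\eta|^2 \leq \frac{c^2|I|^2}{64\pi^2}\int_I |\eta^{\pr}|^2 \leq \frac{1}{64\pi^2}\int_I |\eta^{\pr}|^2,
\end{equation*}
and moving this term to the left gives $\int_I |\eta^{\pr}|^2 \leq C\bigl(E(\si_1)-E(\si_2)\bigr)$ as required.

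The main obstacle is the absorption step: the coefficient $c^2|I|^2$ coming from $\si_2^{\pr\pr}$ and $\eta^{\perp}$ must end up strictly less than $64\pi^2$ after all the estimates are chained. This works only because of the sharp constants in conditions (M1) and (M2), together with the quantitative form of Lemma \ref{length estimate of normal component} (which is precisely why the author reproved \cite[Lemma 5.2]{CM11} with explicit constants). Once the absorption closes, the $W^{1,2}$ bound $\|\eta\|_{W^{1,2}}^2 \leq C'\bigl(E(\si_1)-E(\si_2)\bigr)$ is immediate, and the proof is complete.
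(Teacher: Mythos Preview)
Your proof is correct. The paper does not actually prove this lemma---it simply cites \cite[Lemma~5.1]{CM11}---but your argument is precisely the one Colding--Minicozzi use, and it is also the same scheme the paper itself employs for the free boundary analogue (Lemma~\ref{convexity1}): expand $\int_I|\si_1'-\si_2'|^2$, integrate by parts, use the geodesic equation together with Lemma~\ref{length estimate of normal component} and (M1) to control the $\int_I\langle\eta,\si_2''\rangle$ term, and close the estimate with the Wirtinger inequality. The only cosmetic difference from the paper's computation in (\ref{convexity 3}) is that you keep track of the speed $c$ of $\si_2$ rather than bounding $|\si_2'|\leq L$ outright, and you use the standard Wirtinger inequality (constant $|I|^2/\pi^2$, available since $\eta$ vanishes at both endpoints) instead of the modified one; either choice works since $c\,|I|\leq L\,|I|\leq 1$.
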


We have the analog for the free boundary case.
\begin{lemma}\label{convexity1}
There exists $C^{\pr}$ so that if $I$ is an interval of length at most $\frac{1}{L}$, say $I=[0, l]$, $l\leq\frac{1}{L}$, $\si_{1}: I\rightarrow (M, N)$ is a Lipschitz curve with $|\si_1^{\pr}|\leq L$, with one endpoint lying on $N$, i.e. $\si_{1}(l)\in N$, and $\si_{2}: I\rightarrow (M, N)$ is the minimizing geodesic from $\si_{2}(0)$ to $N$, i.e. $\si_2(0)=\si_{1}(0)$, $\si_2(l)\in N$ and $\si_2^{\pr}(l)\perp N$, then
\begin{equation}\label{convexity inequality1}
dist^2(\si_1, \si_2)\leq C^{\pr}\big(E(\si_1)-E(\si_2)\big).
\end{equation}
\end{lemma}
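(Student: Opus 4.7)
The strategy mirrors the proof of Lemma~\ref{convexity0}, but the boundary term no longer vanishes at $l$, so we must exploit the orthogonality condition $\sigma_2'(l)\perp N$ together with Lemma~\ref{length estimate of normal component}, and replace the standard Wirtinger inequality by the one-sided Lemma~\ref{modified W inequality}.

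The plan is to work in the ambient space $\R^N$ and set $h=\sigma_1-\sigma_2$, viewed as a map $I\to\R^N$. Since $\sigma_2(0)=\sigma_1(0)$, we have $h(0)=0$, which is exactly what powers both the H\"older estimate (\ref{Holder continuity}) and Lemma~\ref{modified W inequality}. Expanding the energies in the flat inner product and integrating by parts gives
\begin{equation*}
E(\sigma_1)-E(\sigma_2)=\int_0^l|h'|^2+2\langle h(l),\sigma_2'(l)\rangle-2\int_0^l\langle h,\sigma_2''\rangle\,dt,
\end{equation*}
so the task reduces to absorbing the last two terms into a small multiple of $\int_0^l|h'|^2$.

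For the boundary term at $l$: both $\sigma_1(l)$ and $\sigma_2(l)$ lie in $N$, and $\sigma_2'(l)$ is normal to $N$ at $\sigma_2(l)$, so $\langle h(l),\sigma_2'(l)\rangle=\langle h(l)^{\perp_N},\sigma_2'(l)\rangle$. Applying Lemma~\ref{length estimate of normal component} to $N$ gives $|h(l)^{\perp_N}|\le\tfrac18|h(l)|^2$, and since $\sigma_2$ has constant speed bounded by $L$ (being a minimizing geodesic shorter than $\sigma_1$) this yields $|2\langle h(l),\sigma_2'(l)\rangle|\le\tfrac{L}{4}|h(l)|^2$. The H\"older-type bound (\ref{Holder continuity}) applied to $h$ with $h(0)=0$ gives $|h(l)|^2\le l\int_0^l|h'|^2\le L^{-1}\int_0^l|h'|^2$, so the whole boundary term is controlled by $\tfrac14\int_0^l|h'|^2$.

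For the interior term, the geodesic equation in $\R^N$ together with (M1) gives $\sigma_2''=A_M(\sigma_2',\sigma_2')\perp T_{\sigma_2}M$ with $|\sigma_2''|\le\tfrac{L^2}{16}$, so $\langle h,\sigma_2''\rangle=\langle h^{\perp_M},\sigma_2''\rangle$ and Lemma~\ref{length estimate of normal component} applied to $M$ bounds this by $\tfrac{L^2}{128}|h|^2$. Finally, Lemma~\ref{modified W inequality} applied coordinatewise to $h$ gives $\int_0^l|h|^2\le\tfrac{l^2}{2}\int_0^l|h'|^2\le\tfrac{1}{2L^2}\int_0^l|h'|^2$, so the interior term contributes at most $\tfrac{1}{128}\int_0^l|h'|^2$. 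Combining, $E(\sigma_1)-E(\sigma_2)\ge\tfrac12\int_0^l|h'|^2$, and adding back $\int_0^l|h|^2\le\tfrac{1}{2L^2}\int_0^l|h'|^2$ gives the claimed bound on $\|h\|_{W^{1,2}}^2$ with an explicit universal constant $C'$.

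The only subtle step is the boundary term at $t=l$: one must recognize that although $h(l)\neq 0$, the orthogonality $\sigma_2'(l)\perp N$ combined with the second-fundamental-form estimate for $N$ forces $\langle h(l),\sigma_2'(l)\rangle$ to be quadratic in $|h(l)|$, which is then small enough to be swallowed by $\int|h'|^2$ thanks to the length constraint $l\le 1/L$. Everything else is routine once this observation is in place.
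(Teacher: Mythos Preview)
Your proposal is correct and follows essentially the same route as the paper's proof: the same integration-by-parts identity, the same use of Lemma~\ref{length estimate of normal component} (applied to $N$ for the boundary term and to $M$ for the interior term), the same H\"older/Cauchy--Schwarz bound on $|h(l)|^2$, and the same appeal to Lemma~\ref{modified W inequality} both to control $\int|h|^2$ in the interior estimate and to pass from $\int|h'|^2$ to the full $W^{1,2}$ norm. Your constants are slightly sharper but the argument is otherwise identical.
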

\begin{proof}
Integration by part and using that $\si_1$ and $\si_2$ are equal at $0$,
\begin{equation}\label{integration by part convexity}
\begin{split}
\int_I |\si_1^{\pr}|^2-\int_I |\si_2^{\pr}|^2 & -\int_I |\si_1^{\pr}-\si_2^{\pr}|^2=2\int_I\lan\si_2^{\pr}, (\si_1-\si_2)^{\pr}\ran ds\\
                                                                      & =2\lan\si_2^{\pr}, (\si_1-\si_2)\ran|_{s=l}-2\int_I\lan(\si_1-\si_2), \si_2^{\pr\pr}\ran ds.
\end{split}
\end{equation}

For the first term in the last line, as $\si_2$ is a minimizing geodesic, with length less or equal than that of $\si_1$, $|\si_2^{\pr}|\leq L$. Also by the fundamental theorem of calculus, the Cauchy-Schwartz inequality, and the fact that $\si_1(0)=\si_2(0)$,
\begin{equation}
\begin{split}
|(\si_1-\si_2)(l)| & =|\int_{0}^{l}(\si_1-\si_2)^{\pr}ds|\leq \int_{0}^{l}|(\si_1-\si_2)^{\pr}|ds\\
                                                 & \leq l^{\frac{1}{2}}\big(\int_{0}^{l}|(\si_1-\si_2)^{\pr}|^{2}ds\big)^{\frac{1}{2}}.
\end{split}
\end{equation}
Since $\si_2^{\pr}(l)$ is normal to $N$, we can use Lemma \ref{length estimate of normal component} for $x=\si_1(l)\in N$ and $y=\si_{2}(l)\in N$, hence
\begin{equation}\label{convexity 2}
\begin{split}
|\lan\si_2^{\pr}, (\si_1-\si_2)\ran|_{s=l}| &\leq \frac{1}{8}|\si_2^{\pr}(l)|\cdot |(\si_1-\si_2)(l)|^{2}\leq \frac{1}{8}L\cdot l\big(\int_{0}^{l}|(\si_1-\si_2)^{\pr}|^{2}ds\big)\\
                                                       &\leq\frac{1}{8}\big(\int_I |\si_1^{\pr}-\si_2^{\pr}|^{2}ds\big).
\end{split}
\end{equation}

For the second term in the last line of (\ref{integration by part convexity}), we can use similar argument as in \cite{CMM08, CM11}. Using the geodesic equation of $\si_2$ in $M$, i.e. $\si_2^{\pr\pr}=A_M(\si_2^{\pr}, \si_2^{\pr})$, where $A_M$ is the second fundamental form of the embedding of $M$ in $\R^N$, and by (M1) in the Section \ref{introduction},
$$|\si_2^{\pr\pr}|\leq(\sup_M|A_M|)|\si_2^{\pr}|^2\leq \frac{1}{16}|\si_2^{\pr}|^2\leq\frac{1}{16}L^2.$$
Since $\si_2^{\pr\pr}$ is normal to $M$, we can use Lemma \ref{length estimate of normal component} for $x=\si_1(s)\in M$ and $y=\si_2(s)\in M$, $s\in I$, hence
\begin{equation}\label{convexity 3}
\begin{split}
|\int_I\lan(\si_1-\si_2), \si_2^{\pr\pr}\ran ds| & \leq \int_I \frac{1}{8} |\si_2^{\pr\pr}|\cdot |\si_1-\si_2|^2ds \leq \frac{1}{8}\cdot\frac{L^2}{16}\int_I |\si_1-\si_2|^2ds\\
                                                              & \leq \frac{1}{128}L^2\cdot\frac{l^2}{2}\int_I |\si_1^{\pr}-\si_2^{\pr}|^2ds\leq \frac{1}{8}\int_I |\si_1^{\pr}-\si_2^{\pr}|^2ds,
\end{split}
\end{equation}
where we used the modified Wirtinger inequality (c.f. Lemma \ref{modified W inequality}) in the third $``\leq"$.

Now plug (\ref{convexity 2})(\ref{convexity 3}) into (\ref{integration by part convexity}),
$$\int_I |\si_1^{\pr}-\si_2^{\pr}|^2ds\leq 2\big(\int_I |\si_1^{\pr}|^2ds- \int_I |\si_2^{\pr}|^2ds\big).$$
Applying the modified Wirtinger inequality (c.f. Lemma \ref{modified W inequality}) with the above inequality, we can get (\ref{convexity inequality1}).
\end{proof}

Now we can finish the proof of Property (3).
\begin{proof}
(of Property (3) of $\Psi$) For Step 1 of $\Psi$, apply Lemma \ref{convexity1} on the boundary intervals $[0, x_2]$ and $[x_{2L-2}, 1]$, and use Lemma \ref{convexity0} on the inner intervals $[x_{2j}, x_{2j+2}]$, $j=1,\cdots, L-2$, and sum them together, we have
\begin{equation}\label{first W12 bound}
dist^2(\ga, \ga_e)\leq C\big(E(\ga)-E(\ga_e)\big)\leq C^{\pr}\big(\frac{Length^2(\ga)-Length^2(\ga_e)}{Length^2(\ga_e)}\big),
\end{equation}
where in the last $``\leq"$ we used the fact that $E(\ga)=Length^2(\ga)$ as $\ga$ has constant speed, and $Length^2(\ga_e)\leq E(\ga_e)$, and $Length(\ga_e)\leq Length(\ga)\leq L$.

The bound of $dist(\ga_e, \ti{\ga}_e)$ is similar to that in \cite{CMM08, CM11}, but as different parametrization is used here, we will give the details for completeness. In fact, $\ga_e$ can be viewed as a reparametrization of $\ti{\ga}_e$, i.e. $\ga_e=\ti{\ga}_e\circ P$, where $P: I\rightarrow I$ is a monotone piecewise linear map. Since $\ti{\ga}_e$ has constant speed, $|\ti{\ga}_e^{\pr}|=Length(\ti{\ga}_e)/1=Length(\ti{\ga}_e)$. Also $\int_I P^{\pr}ds=1$, hence
\begin{equation}\label{estimate of the reparametrization0}
\begin{split}
\int_I (P^{\pr}-1)^2ds & =\int_I |P^{\pr}|^2ds -1 = \int_I\big(\frac{|\ga_e^{\pr}|}{|\ti{\ga}^{\pr}_e\circ P|}\big)^2ds-1\\
                              & =\frac{1}{Length^2(\ti{\ga}_e)}\int_I |\ga_e^{\pr}|^2ds-1=\frac{E(\ga_e)-Length^2(\ti{\ga}_e)}{Length^2(\ti{\ga}_e)},\\
                              & \leq \frac{Length^2(\ga)-Length^2(\ti{\ga}_e)}{Length^2(\ti{\ga}_e)},
\end{split}
\end{equation}
where in the last $``\leq"$ we used the fact that $E(\ga_e)\leq E(\ga)=Length^2(\ga)$ as $\ga$ has constant speed.

To bound $dist(\ga_e, \ti{\ga}_e)$, as $\ga_e(0)=\ti{\ga}_e(0)$ and $\ga_e(1)=\ti{\ga}_e(1)$, we can combine the Wirtinger inequality with the following estimate,
\begin{equation}\label{estimate of the reparametrization1}
\begin{split}
\int_I |\ga_e^{\pr}-\ti{\ga}_e^{\pr}|^2ds & = \int_I |(\ti{\ga}_e^{\pr}\circ P)P^{\pr}-\ti{\ga}_e^{\pr}|^2ds\\
                                                      & \leq 2\int_I |(\ti{\ga_e}^{\pr}\circ P)P^{\pr}-\ti{\ga}_e^{\pr}\circ P|^2ds+2\int_I |\ti{\ga}_e^{\pr}\circ P-\ti{\ga}_e^{\pr}|^2ds
\end{split}
\end{equation}
For the first term, using the fact that $\ti{\ga}_e$ has constant speed, we have $|\ti{\ga}_e^{\pr}|\leq L$, hence
\begin{equation}\label{estimate of the reparametrization2}
\int_I |(\ti{\ga}_e^{\pr}\circ P)P^{\pr}-\ti{\ga}_e^{\pr}\circ P|^2ds\leq\sup_I |\ti{\ga}_e^{\pr}|^2\cdot\int_I |P^{\pr}-1|^2ds\leq L^2\int_I |P^{\pr}-1|^2ds.
\end{equation}
For the second term, using the fact that $\ti{\ga}_e$ is a piecewise linear map, $|\ti{\ga}_e^{\pr\pr}|=|A_M(\ti{\ga}_e^{\pr}, \ti{\ga}_e^{\pr})|\leq\frac{L^2}{16}$ (using property (M1) in Section \ref{introduction}) away from break points. So if $x, y\in I$ are not separated by a break point of $\ti{\ga}_e$, then by the fundamental theorem of calculus,
\begin{equation}\label{estimate of derivative of geodesic}
|\ti{\ga}_e^{\pr}(x)-\ti{\ga}_e^{\pr}(y)|\leq \frac{L^2}{16}|x-y|.
\end{equation}
Now divide $[0, 1]$ into two sets $I_1$ and $I_2$, where $I_1$ is the set of points within distance $(\int_I |P^{\pr}-1|^2)^{1/2}$ of a break point for $\ti{\ga}_e$, so $\Length(I_2)=L\cdot (2(\int_I |P^{\pr}-1|^2)^{1/2})=2L(\int_I |P^{\pr}-1|^2)^{1/2}$. As $P(0)=0$, we have
$$|P(x)-x|\leq |\int_0^x (P^{\pr}(s)-1)ds|\leq x^{1/2}\big(\int_I |P^{\pr}-1|^2\big)^{1/2}\leq \big(\int_I |P^{\pr}-1|^2\big)^{1/2}.$$
So if $x\in I_2$, then $P(x)$ lies in the same geodesic segment as $x$. Hence using (\ref{estimate of derivative of geodesic}) and the Wirtinger inequality,
$$\int_{I_2}|\ti{\ga}_e^{\pr}\circ P-\ti{\ga}_e^{\pr}|^2ds\leq\frac{L^4}{256}\int_{I_2}|P(s)-s|^2ds\leq\frac{L^4}{256}\int_I |P^{\pr}-1|^2ds.$$
Also as $|\ti{\ga}_e^{\pr}|\leq L$, so on $I_1$,
$$\int_{I_1}|\ti{\ga}_e^{\pr}\circ P-\ti{\ga}_e^{\pr}|^2ds\leq 4L^2 \Length(I_2)\leq 8L^3(\int_I |P^{\pr}-1|^2)^{1/2}.$$
Combining the above two inequalities with (\ref{estimate of the reparametrization2})(\ref{estimate of the reparametrization1}) and (\ref{estimate of the reparametrization0}), together with (\ref{first W12 bound}), we can prove property (3) for Step 1 and Step 2 by realizing that $Length(\ga_e), Length(\ti{\ga}_e)\geq Length(\Psi(\ga))$. Estimates for $dist(\ti{\ga}_e, \ga_o)$ and $dist(\ga_o, \ti{\ga}_o)$ in Step 3 and Step 4 are the same and even easier as we only need Lemma \ref{convexity0}, so we omit the proof here.
\end{proof}


\subsection{Property (4) of $\Psi$.} The fact that fixed points of $\Psi$ are geodesics in the case of closed curve was discussed in \cite[page 169]{CM11}\cite[\S 2]{Cr88}.

\begin{lemma}\label{fixed point of Psi}
Given $\ga\in\La$, if $\Psi(\ga)=\ga$, then $\ga\in G$, i.e. $\ga$ is a geodesic with free boundary on $N$.
\end{lemma}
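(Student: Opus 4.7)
The plan is to exploit the fact, noted at the end of \S 3.1, that each of the four steps defining $\Psi$ is energy non-increasing, with equality only in rigidly characterized situations. Combining $\Psi(\ga)=\ga$ with $E(\ga)=\Length^{2}(\ga)$ (which holds because $\ga\in\La$ has constant speed), we obtain equality of energies at every intermediate stage of the construction, and I would translate each equality into pointwise geodesic information about $\ga$.

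Concretely, in Step 1 the energy equality on each inner even interval $[x_{2j},x_{2j+2}]$ forces $\ga$ restricted there to coincide with the (constant-speed) minimizing geodesic joining $\ga(x_{2j})$ and $\ga(x_{2j+2})$, since the linear replacement is the unique energy minimizer among curves with those endpoints. On the two boundary even intervals, equality of energies forces $\ga|_{[0,x_{2}]}$ to agree with the minimizing geodesic from $\ga(x_{2})$ to $N$, whose uniqueness is guaranteed by the focal radius condition (M2)$^{\pr}$; the first variation at the free endpoint then yields the orthogonality $\ga^{\pr}(0)\perp N$, and symmetrically $\ga^{\pr}(1)\perp N$. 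Because $\ga_{e}=\ga$ already has constant speed, Step 2 is the identity and $\ti{x}_{j}=x_{j}$; the same argument then applies verbatim in Step 3, forcing $\ga|_{[x_{2j-1},x_{2j+1}]}$ to be a constant-speed minimizing geodesic between its endpoints for $j=1,\ldots,L-1$, and Step 4 is again trivial.

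At this point every potential break point of $\ga$ lies in the interior of an interval on which $\ga$ has just been shown to be a smooth geodesic: each even break point $x_{2j}$ for $1\le j\le L-1$ sits inside the odd interval $[x_{2j-1},x_{2j+1}]$, while each odd break point $x_{2j-1}$ sits inside an even interval (including the boundary pieces $[0,x_{2}]$ and $[x_{2L-2},1]$ for $j=1$ and $j=L$). Uniqueness of geodesics with prescribed initial data then upgrades $\ga$ to a single smooth geodesic on $[0,1]$, which combined with $\ga(0),\ga(1)\in N$ and the orthogonality above places $\ga\in G$. I expect the main subtlety to be the boundary step: one must convert energy preservation under a free-boundary replacement into both the pointwise identification of the curve with the minimizer \emph{and} the orthogonality condition, which is where the focal radius assumption (M2)$^{\pr}$ enters crucially---without it the minimizing geodesic from $\ga(x_{2})$ to $N$ need not be unique and the clean ``energy preserved implies curve equals minimizer'' step would require additional argument.
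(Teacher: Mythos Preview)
Your proposal is correct and follows the same overall line as the paper: use that each of the four steps is energy non-increasing, force equality at every stage from $\Psi(\ga)=\ga$, and then exploit the overlap of the even and odd intervals to remove all potential break points and obtain the free-boundary orthogonality at $x=0,1$. The one minor difference is in how you justify the identifications $\ga=\ga_e$ and $\ga=\ga_o$: you argue directly via uniqueness of the (constant-speed) energy minimizer---fixed endpoints on inner intervals, one free endpoint on $N$ for the boundary intervals via the focal-radius hypothesis (M2)$^{\pr}$---whereas the paper simply invokes the quantitative convexity estimates Lemma~\ref{convexity0} and Lemma~\ref{convexity1}, which give $dist^{2}(\si_1,\si_2)\le C\big(E(\si_1)-E(\si_2)\big)$ and hence $\si_1=\si_2$ once the energies agree. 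Your route is slightly more self-contained in that it does not need those quantitative bounds; the paper's route has the virtue of reusing machinery already established for Property~(3).
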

\begin{proof}
We can assume that $\ga$ is not a point curve, or the statement is trivial. By the discussion at the end of \S \ref{defining Psi}, the energy is non-increasing under the four steps, i.e. $E(\ga)\geq E(\ga_e)\geq E(\ti{\ga}_e)\geq E(\ga_o)\geq E(\Psi(\ga))$. As $E(\Psi(\ga))=E(\ga)$, the energy must be the same. For $\ga_e\rightarrow\ti{\ga}_e$ and $\ga_o\rightarrow\ti{\ga}_o=\Psi(\ga)$, by (\ref{estimate of the reparametrization0}) where $E(\ga_e)=E(\ti{\ga}_e)=Length^2(\ti{\ga}_e)$ (similar estimates also hold for $\ga_o$ and $\ti{\ga}_o$), we know that $\ga_e=\ti{\ga}_e$ and $\ga_o=\Psi(\ga)=\ga$, i.e. the reparametrization $P\equiv id$. As $E(\ga)=E(\ga_e)=E(\ga_o)$, by Lemma \ref{convexity0} and Lemma \ref{convexity1}, we know that $\ga=\ga_e=\ga_o$. The fact $\ga=\ga_e$ implies that the break points of $\ga$ can only appear at
$\ga(x_{2j})$, $j=1, \cdots, L-1$, and $\ga$ is perpendicular to $N$ at boundary points, i.e. $\ga$ has free boundary on $N$. Also as the points $x_{2j}$, $j=1, \cdots, L-1$ are smooth points of $\ga_o$, hence $\ga$ has no break points.
\end{proof} 


\subsection{Property (5) of $\Psi$.} This is a direct corollary of other properties of $\Psi$ by a contradiction argument. Similar argument in the case of closed curves appeared in \cite[page 169]{CM11}. Suppose in contradiction, say there exists an $\ep>0$, and a sequence of $\ga_j\in\La$, such that
$$Length(\Psi(\ga_j))\geq Length(\ga_j)-\frac{1}{j},\qquad dist(\ga_j, G)\geq\ep.$$
As the second condition implies that $Length(\ga_j)$ is bounded away from zero, the first condition and property (3) implies that
$$dist(\ga_j, \Psi(\ga_j))\rightarrow 0.$$
Note that the space $\La$ is compact, as every $\si\in\La$ depends continuously on the boundary points on $N$ and the $L-1$ break points in the compact manifold $M$ (or in a compact convex region when $M$ is non-compact). Hence a subsequence of $\{\ga_j\}$ will converge to some $\ga\in\La$. As $\Psi$ is continuous on $\La$, i.e. property (1), $\Psi(\ga)=\ga$, which implies that $\ga\in G$ by Lemma \ref{fixed point of Psi}, contradiction to that $dist(\ga, G)=\lim_{j\rightarrow\infty}dist(\ga_j, G)\geq\ep$.


\subsection{Property (1) of $\Psi$.} Now we prove the continuity of $\Psi$. Similar argument in the case of closed curves appeared in \cite{CM11}. Here we need to carefully deal with the case of free boundary linear replacement. The continuity of the first two steps for $\Psi$ follows from the following lemma, and the continuity of the last two steps follows from the closed case (c.f. \cite[Lemma 5.3]{CM11}).

\begin{lemma}\label{continuity of Psi1}
Given a large integer $L$. Let $\ga: [0, 1]\rightarrow M$ be a $W^{1, 2}$-map, with $\ga(0), \ga(1)\in N$, $E(\ga)\leq L$. If $\ga_e$, $\ti{\ga}_e$ are given by the first two steps for $\Psi$ in \S \ref{defining Psi}, then the map $\ga\rightarrow\ti{\ga}_e$ is continuous from $W^{1, 2}(I, M)$ to $\La$.
\end{lemma}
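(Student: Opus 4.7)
The plan is to factor $\ga\mapsto\tilde{\ga}_e$ as the composition of three continuous operations: (i) evaluation of $\ga$ at the fixed break points $x_2,x_4,\ldots,x_{2L-2}$, producing a tuple in $M^{L-1}$; (ii) piecewise geodesic replacement, yielding $\ga_e$; and (iii) constant-speed reparametrization, yielding $\tilde{\ga}_e$. I will verify continuity of each step.

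For (i), the H\"older estimate \eqref{Holder continuity} gives that each point-evaluation $\ga\mapsto\ga(x_j)\in M$ is continuous from $W^{1,2}(I,M)$. Combined with $E(\ga)\le L$ and $|x_{2j+2}-x_{2j}|=1/L$, the same inequality yields $|\ga(x_{2j})-\ga(x_{2j+2})|^2\le 1$ for every adjacent pair of even break points, and in particular $|\ga(x_2)-\ga(0)|^2\le 1$ (similarly at $x_{2L-2}$). By (M2) and (M2)$'$, these distances lie well inside both the injectivity radius of $M$ and the focal radius of $N$, so all relevant minimizing geodesics are uniquely and smoothly defined.

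For (ii), on each inner interval $[x_{2j},x_{2j+2}]$ the constant-speed minimizing geodesic between two points within the injectivity radius depends smoothly on its endpoints, by the standard inverse function theorem argument for the exponential map; on the two boundary intervals, the nearest-point projection $\pi_N$ is smooth at $\ga(x_2)$ (since $\ga(x_2)$ lies inside a tubular neighborhood of $N$ of width less than the focal radius), so the minimizing geodesic from $\pi_N(\ga(x_2))$ to $\ga(x_2)$, parametrized at constant speed, likewise depends smoothly on $\ga(x_2)$. Combined with (i), the map $\ga\mapsto\ga_e$ is continuous from $W^{1,2}(I,M)$ into the piecewise-linear elements of $\La$ (with continuity even in $C^2$ on each of the $L$ subintervals).

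For (iii), the $L$ segment lengths $\ell_1,\ldots,\ell_L$ of $\ga_e$ are smooth functions of the endpoint data (they are either the $M$-distance between two nearby interior points, or the $N$-distance $dist(\ga(x_2),N)$, both smooth by (M2) and (M2)$'$), so the piecewise linear monotone change of variables $P:I\to I$ determined by the cumulative fractional length $\sum_{k\le j}\ell_k/\sum_k\ell_k$ depends continuously on $\ga$. The computation carried out in \eqref{estimate of the reparametrization0}--\eqref{estimate of derivative of geodesic} then bounds $\|\ga_e-\tilde{\ga}_e\|_{W^{1,2}}$ in terms of $\int_I|P'-1|^2$, and applying the same bound to a perturbation yields continuity of $\ga_e\mapsto\tilde{\ga}_e$. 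The main new ingredient beyond the closed-curve case of \cite[Lemma 5.3]{CM11} is the smooth dependence of the free boundary minimizing geodesic on its interior endpoint, which is precisely what (M2)$'$ provides; this is the only genuinely new delicate step, and the rest is a direct analogue of the closed case.
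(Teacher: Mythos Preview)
Your steps (i) and (ii) are correct and essentially match the paper's observations (C1), (C2), (C2)$'$. In fact your treatment of the free boundary segment in (ii) is cleaner than the paper's: you invoke smoothness of the nearest-point projection $\pi_N$ inside the focal radius, which immediately gives smooth dependence of the boundary geodesic on $\ga(x_2)$. The paper instead proves an explicit $W^{1,2}$ estimate (Lemma~\ref{boundary continuity in free boundary case}) by integration by parts, then feeds it back through (C2) to obtain $C^1$ continuity. Both work, but yours is shorter and more conceptual; the paper's route has the advantage of producing a quantitative modulus of continuity, which it reuses elsewhere.

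Step (iii), however, has a genuine gap. The estimates \eqref{estimate of the reparametrization0}--\eqref{estimate of derivative of geodesic} bound $\|\ga_e-\ti{\ga}_e\|_{W^{1,2}}$ for a \emph{single} curve in terms of $\int_I|P'-1|^2$; they do not compare $\ti{\ga}_e^1$ with $\ti{\ga}_e^2$ for two nearby inputs. A triangle inequality through $\ga_e^1$ and $\ga_e^2$ fails: the side terms $\|\ga_e^i-\ti{\ga}_e^i\|$ are bounded but do not tend to zero as $\ga^1\to\ga^2$. Moreover, although the break-point positions $y_j=\sum_{k<j}\ell_k/\sum_k\ell_k$ vary continuously, the slopes $P'$ do not: if some $\ell_j$ is zero (or nearly so), the inverse reparametrization has unbounded derivative, and your argument says nothing about this case.

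The paper confronts this directly: it writes $I=\big(\bigcup_j I_j\big)\cup\big(I\setminus\bigcup_j I_j\big)$ with $I_j=I_j^1\cap I_j^2$, uses the uniform speed bound $|(\ti{\ga}_e^i)'|\le L$ on the small-measure complement, and on each $I_j$ distinguishes between short segments ($a_j^1<\ep$, where both contributions are $O(\ep)$) and long segments ($a_j^1\ge\ep$, where the reparametrizations $P_j^i$ have comparable bounded derivatives and one can use $C^1$-closeness of the geodesic pieces together with their $C^2$ bound). This case splitting is precisely what handles the degeneration $\ell_j\to 0$, and it is missing from your sketch. To repair (iii) you should either reproduce that argument or give an alternative that explicitly controls $\int_I|(\ti{\ga}_e^1)'-(\ti{\ga}_e^2)'|^2$ near the moving break points and across possibly degenerate segments.
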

Recall that $\{x_0, x_2 \cdots, x_{2L-2}, x_{2L}=1\}$ are the evenly spaced points where we do linear replacement. By (\ref{Holder continuity}), $|\ga(x_{2j})-\ga(x_{2j+2})|\leq\big(\frac{1}{L}E(\ga)\big)^{1/2}\leq 1$, hence $d_M(\ga(x_{2j}), \ga(x_{2j+2}))\leq 2$ by (M3) in \S \ref{introduction}, and we can apply Step 1 of \S \ref{defining Psi} by (M2) and (M2)$^{\pr}$ in \S \ref{introduction}. Now recall two observations used in \cite{CM11} ((C1)(C2) already appeared in \cite[page 170]{CM11}), 
\begin{itemize}
\addtolength{\itemsep}{-0.7em}
\setlength{\itemindent}{1em}

\item[(C1)] Curves which are $W^{1, 2}$ close are also $C^0$ close , hence the points $\ga_e(x_{2j})=\ga(x_{2j})$ are continuous with respect to $W^{1, 2}$-norm of $\ga$.

\item[(C2)] Let $\Ga=\{(x, y)\in M\times M:\ dist_M(x, y)\leq 4\}$, and define
\begin{equation}\label{definition of H}
H: \Ga\rightarrow C^{1}([0, \frac{1}{L}], M)
\end{equation}
such that $H(x, y)$ is the linear map from $x$ to $y$, then $H$ is continuous on $\Ga$.

\item[(C2)$^{\pr}$] Let $\Ga^{\pr}=\{x\in M:\ dist_M(x, N)\leq 2\}$, and define
\begin{equation}\label{definition of H prime}
H^{\pr}: \Ga^{\pr}\rightarrow C^1([0, \frac{1}{L}], M)
\end{equation}
such that $H^{\pr}(x)$ is the minimizing geodesic from $x$ to $N$, then $H^{\pr}$ is continuous on $\Ga^{\pr}$.
\end{itemize}

To make (C2)$^{\pr}$ precise, we have the following lemma.
\begin{lemma}\label{boundary continuity in free boundary case}
Given $L\geq 2$, and $x_1, x_2\in \Ga^{\pr}$, and let $\si_1, \si_2: [0, \frac{1}{L}]\rightarrow M$ be two minimizing geodesic from $\si_1(0)=x_1$ and $\si_2(0)=x_2$ to $N$ respectively, with $\si_1(\frac{1}{L}), \si_2(\frac{1}{L})\in N$, and meet $N$ orthogonally there, then there exists a continuous function $\phi: [0, \infty)\rightarrow [0, \infty)$, with $\phi(0)=0$, such that
$$dist(\si_1, \si_2)\leq  \phi\big(d_M(x_1, x_2)\big).$$
\end{lemma}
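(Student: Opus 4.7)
The plan is to prove this by a compactness/contradiction argument, exploiting the uniqueness of minimizing geodesics from points inside the focal tube of $N$ guaranteed by hypothesis (M2)$^{\pr}$.

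\textbf{Setup and uniqueness.} First I would observe that since $x_i\in\Ga^{\pr}$ satisfies $dist_M(x_i,N)\leq 2$, which is strictly less than the focal radius $4$ of $N$ coming from (M2)$^{\pr}$, there is a unique foot $\pi_N(x_i)\in N$ realizing the distance, and the unique minimizing geodesic from $x_i$ to $N$ is precisely the radial segment in normal exponential coordinates based at $\pi_N(x_i)$. In particular $\si_i$ is completely determined by $x_i$: the initial velocity $\si_i^{\pr}(0)$ has norm $L\cdot dist_M(x_i,N)\leq 2L$, and the ODE $\si_i^{\pr\pr}=A_M(\si_i^{\pr},\si_i^{\pr})$ with this initial data integrates to give $\si_i$ on $[0,1/L]$.

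\textbf{Contradiction argument for the modulus.} Suppose no such modulus $\phi$ exists. Then there exist $\ep_0>0$ and sequences $x_1^k, x_2^k\in\Ga^{\pr}$ with $d_M(x_1^k,x_2^k)\to 0$ while the associated free-boundary minimizing geodesics $\si_1^k,\si_2^k$ satisfy $dist(\si_1^k,\si_2^k)\geq\ep_0$ in the $W^{1,2}$-norm. Since $\Ga^{\pr}$ is contained in the $2$-neighborhood of $N$ (and we may restrict to a compact piece, invoking the convention after the Remark following (M3)$^{\pr}$ in the non-compact case), pass to a subsequence so that $x_1^k,x_2^k$ both converge to a common limit $x^{\infty}\in\Ga^{\pr}$. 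The initial velocities $v_i^k=\si_i^{k\,\pr}(0)$ are uniformly bounded by $2L$, so after a further subsequence $v_i^k\to v_i^{\infty}$. By continuous dependence of geodesic ODEs on initial data, the $\si_i^k$ converge in $C^1([0,1/L],M)$ (hence in $W^{1,2}$) to geodesics $\si_i^{\infty}$, each minimizing from $x^{\infty}$ to $N$, meeting $N$ orthogonally at length $dist_M(x^{\infty},N)$.

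\textbf{Closing the contradiction.} By the uniqueness in the setup step, $\si_1^{\infty}=\si_2^{\infty}$. Consequently
\[
dist(\si_1^k,\si_2^k)\;\leq\;dist(\si_1^k,\si_1^{\infty})+dist(\si_2^{\infty},\si_2^k)\;\longrightarrow\;0,
\]
contradicting $dist(\si_1^k,\si_2^k)\geq\ep_0$. This yields the existence of the modulus $\phi$ with $\phi(0)=0$; one may take, e.g., $\phi(r):=\sup\{dist(\si_1,\si_2):x_1,x_2\in\Ga^{\pr},\ d_M(x_1,x_2)\leq r\}$, which is nondecreasing, and the argument above forces $\limsup_{r\to 0}\phi(r)=0$, giving continuity at $0$; one can then replace $\phi$ by a continuous majorant if desired.

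\textbf{Main obstacle.} The one subtlety I expect to watch is the identification that the geodesics $\si_i^{\infty}$ really meet $N$ orthogonally and are themselves minimizing; this is automatic because orthogonality at the endpoint and the length $=dist_M(\cdot,N)$ are closed conditions under $C^1$-convergence, and any geodesic segment of length $dist_M(x^{\infty},N)$ from $x^{\infty}$ to $N$ must be minimizing. Everything else is standard ODE continuous dependence, once the focal-radius hypothesis has supplied uniqueness.
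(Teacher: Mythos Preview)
Your argument is correct but takes a genuinely different route from the paper. The paper proceeds by a direct quantitative estimate, essentially repeating the integration-by-parts computation of Lemma~\ref{convexity1}: one writes
\[
\int|\si_1^{\pr}|^2 - \int|\si_2^{\pr}|^2 - \int|\si_1^{\pr}-\si_2^{\pr}|^2 = 2\int\langle\si_2^{\pr},(\si_1-\si_2)^{\pr}\rangle,
\]
integrates by parts, and bounds the three resulting terms separately (the boundary term at $s=0$ by $|\si_2^{\pr}(0)|\cdot|x_1-x_2|$, the boundary term at $s=\tfrac{1}{L}$ via orthogonality to $N$ and Lemma~\ref{length estimate of normal component}, and the interior term via the geodesic equation and (M1)). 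Symmetrizing in $\si_1,\si_2$ cancels the energy difference and yields an explicit polynomial modulus $\phi$.

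Your compactness approach is softer: you trade the explicit bound for a contradiction argument using uniqueness of the nearest-point geodesic inside the focal tube (from (M2)$^{\pr}$) together with $C^1$ continuous dependence for the geodesic ODE. This is conceptually cleaner and avoids the bookkeeping, but it produces only an abstract modulus and leans on compactness of $\Ga^{\pr}$ (fine here since $N$ is closed, and you correctly flag the non-compact case). The paper's direct estimate has the minor advantage of plugging straight into Remark~\ref{remark of boundary continuity}, where continuity of the endpoint $\si_i(\tfrac{1}{L})$ is read off from the same integral inequalities; your argument yields that endpoint continuity as well, as a byproduct of $C^1$-convergence, so nothing is lost for the downstream application.
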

\begin{proof}
Using integration by parts as in Lemma \ref{convexity1}
\begin{equation}\label{integration by part2}
\begin{split}
\int |\si_1^{\pr}|^2 & -\int |\si_2^{\pr}|^2-\int |\si_1^{\pr}-\si_2^{\pr}|^2=2\int \lan\si_2^{\pr}, (\si_1-\si_2)^{\pr}\ran dt\\
                         & =2\underbrace{\lan\si_2^{\pr}(\frac{1}{L}), \si_1(\frac{1}{L})-\si_2(\frac{1}{L})\ran}_{I_1}-2\underbrace{\lan\si_2^{\pr}(0), \si_1(0)-\si_2(0)\ran}_{I_2}-2\underbrace{\int \lan(\si_1-\si_2), \si_2^{\pr\pr}\ran}_{I_3}.
\end{split}
\end{equation}
First using the fact that $|\si_2^{\pr}|=\Length(\si_2)=d_M(x_2, N)\leq 2$,
$$|I_2|\leq|\si_2^{\pr}(0)|\cdot |(\si_1-\si_2)(0)|\leq 2 d_M(x_1, x_2).$$
Also using Lemma \ref{length estimate of normal component} and the fundamental theorem of calculus,
\begin{displaymath}
\begin{split}
|I_1| & \leq \frac{1}{8}|\si_2^{\pr}(\frac{1}{L})|\cdot|\si_1(\frac{1}{L})-\si_2(\frac{1}{L})|^2\\
       & \leq \frac{1}{8} d_M(x_2, N)\cdot \big|\si_1(0)-\si_2(0)+\int_0^{1/L}\big(\si_1^{\pr}-\si_2^{\pr}\big)ds\big|^2\\
       & \leq \frac{1}{8} d_M(x_2, N)\cdot  2\big[|x_1-x_2|^2+|\int_0^{1/L} (\si_1^{\pr}-\si_2^{\pr})ds|^2\big]\\
       & \leq \frac{1}{8} d_M(x_2, N)\cdot  2\big[|x_1-x_2|^2+\frac{1}{L}\int_0^{1/L} |\si_1^{\pr}-\si_2^{\pr}|^2ds\big]\\
       & \leq \frac{1}{2} d_M^2(x_1, x_2)+\frac{1}{4}\int_0^{1/L}|\si_1^{\pr}-\si_2^{\pr}|^2ds,
\end{split}
\end{displaymath}
where we used the Cauchy-Schwartz inequality in the fourth $``\leq"$, and the fact that $d_M(x_2, N)\leq 2$ and $L\geq 2$ in the last $``\leq"$.

Similarly to (\ref{convexity 3}), 
\begin{displaymath}
\begin{split}
|I_3| &\leq \int_0^{1/L} |\lan\si_1-\si_2, \si_2^{\pr\pr}\ran|dt \leq \int_0^{1/L}\frac{1}{8}|\si_2^{\pr\pr}|\cdot |\si_1-\si_2|^2dt\\
        & \leq \frac{1}{8}(\sup_M |A_M|)\cdot |\si_2^{\pr}|^2 \int_0^{1/L} |\si_1-\si_2|^2dt\\
        & \leq \frac{1}{32} \int_0^{1/L} \big|(\si_1-\si_2)(0)+\int_0^t(\si_1^{\pr}-\si_2^{\pr})(s)ds \big|^2dt\\
        & \leq \frac{1}{16} \int_0^{1/L} \big[|\si_1(0)-\si_2(0)|^2+t\int_0^t|\si_1^{\pr}-\si_2^{\pr}|^2(s)ds\big]dt\\
        & \leq \frac{1}{16 L} d_M^2(x_1, x_2)+\frac{1}{32 L}\int_0^{1/L} |\si_1^{\pr}-\si_2^{\pr}|^2(t)dt.
\end{split}
\end{displaymath}

By plugging the above to (\ref{integration by part2}), we get
$$E(\si_1)-E(\si_2)+(1+\frac{1}{8 L})d_M^2(x_1, x_2)+4 d_M(x_1, x_2)\geq c\int|\si_1^{\pr}-\si_2^{\pr}|^2ds,$$
for some $c>0$. Reverse the role of $\si_1$ and $\si_2$, and sum the above inequality together, we get
$$\varphi\big(d_M(x_1, x_2)\big) \geq c\int|\si_1^{\pr}-\si_2^{\pr}|^2ds,$$
for $\varphi(x)=(1+\frac{1}{8 L})x^2+4x$.

Similarly by the fundamental theorem of calculus,
\begin{displaymath}
\begin{split}
\int|\si_1-\si_2|^2 & \leq \int\Big|\big[\si_1(0)-\si_2(0)+\int_0^s(\si_1^{\pr}-\si_2^{\pr})dt\big]\Big|^2ds\\
                          &\leq \frac{2}{L}d_M^2(x_1, x_2)+\frac{1}{L}\int|\si_1^{\pr}-\si_2^{\pr}|^2.
\end{split}
\end{displaymath}
\end{proof}
\begin{remark}\label{remark of boundary continuity}
The lemma is still true if the defining intervals of $\si$ has length less than $\frac{1}{L}\leq\frac{1}{2}$.
To get (C2)$^{\pr}$, we can use the above lemma and the fundamental theorem of calculus to show that $d_M(\si_1(\frac{1}{L}), \si_2(\frac{1}{L}))$ depends continuously on $d_M(\si_1(0), \si_2(0))$, and then use (C2). 
\end{remark}

\vspace{5pt}
\begin{proof}
(of Lemma \ref{continuity of Psi1}) 
Given two $W^{1, 2}$-maps $\ga^1$, $\ga^2$ as in the lemma which are $W^{1, 2}$ close, we can assume that $\ga^1$ is not a constant curve, or the proof is trivial. Let $a^i_0=d_M(\ga^i(x_2), N)$, $a^i_{L-1}=d_M(\ga^i(x_{2L-2}), N)$, and $a^i_j=d_M\big(\ga^i(x_{2j}), \ga^i(x_{2j+2})\big)$, $j=1, \cdots, L-2$, and $S^i=\sum_{j=0}^{L-1}a^i_j$, $i=1, 2$, then $S^i>0$. By (C1), the points $\ga^i(x_{2j})=\ga_e^i(x_{2j})$, $j=1, \cdots, L-1$ and hence the numbers $a^i_j$, $S^i$ are all continuous with respect to $\ga^i$ for $i=1, 2$. Therefore, the geodesic segments $\ga_e^1|_{[x_{2j}, x_{2j+2}]}$ and $\ga_e^2|_{[x_{2j}, x_{2j+2}]}$ are $C^1$ close on $[x_{2j}, x_{2j+2}]$ for $j=0, 1,\cdots, L-1$ by (C2) and (C2)$^{\pr}$, so $\ga\rightarrow\ga_e$ is continuous.

Since the reparametrization from $\ga_e\rightarrow\ti{\ga}_e$ fixes the boundary point, $\ti{\ga}_e^1(0)$ and $\ti{\ga}_e^2(0)$ are close by Remark \ref{remark of boundary continuity}. To show that $\ti{\ga}_e^1$ and $\ti{\ga}_e^2$ are $W^{1, 2}$ close, we only need to show that $\int|(\ti{\ga}_e^1-\ti{\ga}_e^2)^{\pr}|^2$ is small, and then apply the modified Wirtinger inequality to $\big(\ti{\ga}_e^1(t)-\ti{\ga}_e^2(t)-[\ti{\ga}_e^1(0)-\ti{\ga}_e^2(0)]\big)$. 

After the reparametrization, the constant speed curves $\ti{\ga}_e^i$ are geodesic segments on the intervals $I^i_j=[\frac{1}{S^i}\sum_{l<j}a^i_l, \frac{1}{S^i}\sum_{l\leq j}a^i_l]$, and $\ti{\ga}_e^i=\ga_e\circ P^i_j$ on each interval $I^i_j$ with $a^i_j\neq 0$, where the reparametrization $P^i_j: I^i_j\rightarrow [x_{2j}, x_{2j+2}]$ are just linear maps. Clearly $I^1_j$, $I^2_j$ and $P^1_j$, $P^2_j$ are close respectively. 
Let $I_j=I^1_j\cap I^2_j$, then $I\setminus\{\cup_{j=0}^{L-1}I_j\}$ is very small. Using the fact that $\ti{\ga}_e^i$ have constant speed with energy less than $L$, we have $|(\ti{\ga}_e^i)^{\pr}|\leq L$, so
$$\int_{I\setminus\cup I_j}|(\ti{\ga}_e^1-\ti{\ga}_e^2)^{\pr}|^2\leq 4L^2\cdot \Length(I\setminus\cup_j I_j),$$
which is hence small.

Given $\ep>0$ small enough, we can divide $I_j$ into two sub-classes. If $a^1_j<\ep$, we can assume that $a^2_j<2\ep$ by continuity, then
$$\int_{I_j}|(\ti{\ga}_e^1-\ti{\ga}_e^2)^{\pr}|^2\leq 2\int_{I^1_j}|(\ti{\ga}_e)^{\pr}|^2+2\int_{I^2_j}|(\ti{\ga}_e)^{\pr}|^2\leq 2L^2(a^1_j+a^2_j)<6L^2\ep.$$
If $a^1_j\geq\ep$, we can assume that $a^2_j\geq\frac{\ep}{2}$ by continuity, then $|(P^i_j)^{\pr}|=\frac{S^i}{a^i_j}$, and
$$\int_{I_j}|(\ti{\ga}_e^1-\ti{\ga}_e^2)^{\pr}|^2=\int_{I_j}\big|((\ga_e^1)^{\pr}\circ P^1_j)(P^1_j)^{\pr}-((\ga_e^2)^{\pr}\circ P^2_j)(P^2_j)^{\pr}\big|^2,$$
which can be made small as $\ga_e^i$ are close in $C^1$-norm and bounded in $C^2$-norm\footnote{This is because $|\ga_e^{\pr\pr}|\leq \sup|A_M|\cdot|\ga_e^{\pr}|^2\leq\frac{1}{16}L^2$ by  the geodesic equation and (M1) in \S \ref{introduction}.}, and $P^i_j$, $|(P^i_j)^{\pr}|$ are close as linear maps and numbers respectively.
\end{proof}


\subsection{Property (2) of $\Psi$.} We only need to prove that $\Psi$ preserves the homotopy class, as the length decreasing property is trivially true.
In fact, this is just a corollary of Lemma \ref{homotopy equivalence} and Remark \ref{remark of homotopy equivalence} in the next section.


\section{Good sweepouts and free boundary min-max geodesics}\label{existence}

In this section, we will discuss apply the curve shortening process $\Psi$ to sweepouts in Definition \ref{variational space1} and prove Theorem \ref{main theorem2}. The foremost interesting question is when the width $W$, defined by (\ref{width}) corresponding to a sweepout $\si_0$,  is positive. In fact, $W$ is positive when $\si_0$ represents a nontrivial homotopy class in $\Om$. A special case is when the constraint submanifold $N$ bounds a non-contractable disk (among all disks with boundary lying on $N$). Suppose in contradiction that the width is small enough, then there exists a sweepout $\si$ homotopic to $\si_0$, and the energy of each $\si(t, \cdot)$ is very small, hence $\si(t, \cdot)$ all lie in a convex geodesic neighborhood of $N$. Property (M2)$^{\pr}$ implies that we can continuously shrink $\si(t, \cdot)$ to a point through curves with boundary lying on $N$. In fact, we can shrink $\si(\cdot, t)$ in a continuous way with respect to $``t"$ (e.g. using local coordinate charts), and homotopically deform $\si$ to a family of point curves, which contradicts the fact that $\si_0$ is homotopically nontrivial.


\subsection{Sweepouts and $\Psi$.}\label{sweepouts and Psi} Given a sweepout $\hat{\si}\in\Om$, we need to define a precise way to apply the curve shortening process $\Psi$ to deform $\hat{\si}$ to a sweep-out in $\La$. Assume that the maximal energy of slices $\hat{\si}(t)(\cdot)=\hat{\si}(t, \cdot)$ is bounded by a number $W_0$, i.e. $\max_{t\in[0, 1]}E(\hat{\si}(t))\leq W_0$, then the Cauchy-Schwartz inequality implies a uniform bound for the length and $C^{1/2}$-H{\"o}lder continuity for each $\hat{\si}(t)$, i.e.
$$d_M(\hat{\si}(t, x), \hat{\si}(t, y))\leq \Length(\hat{\si}(t)|_{[x, y]})=\int_x^y|\partial_x\hat{\si}(t, x)|dx$$
$$\leq |y-x|^{\frac{1}{2}}\big(\int_I |\partial_x\hat{\si}(t, x)|^2dx\big)^{\frac{1}{2}}\leq |x-y|^{\frac{1}{2}}W_0^{\frac{1}{2}}.$$
We will deform $\hat{\si}(t)$ to $\si(t)$ by Step 1 and Step 2 in \S \ref{defining Psi}. By the uniform $C^{1/2}$-H{\"o}lder bound, there exists an evenly spaced partition of $I=[0, 1]$ by $N$ points, i.e. $x_0=0, x_1, x_2, \cdots, x_N=1$, such that the length of $\hat{\si}(t)|_{[x_j, x_{j+1}]}$ is bounded by $2$ for all $t\in[0, 1]$, $j=0, 1, \cdots, N-1$. Hence we can apply Step 1 to $\hat{\si}(t)$ by (M2) and (M2)$^{\pr}$ to get $\si_e$, and then reparametrize $\si_e(t)$ to get a constant speed mapping $\ti{\si}_e(t)$, which we denote by $\si(t)$. By Lemma \ref{continuity of Psi1}, we know that $t\rightarrow \si(t)$ is a continuous mapping from $[0, 1]$ to $W^{1, 2}(I, M)$, and it is easy to see that $\si\in\Om$. Also the length bound of $\hat{\si}(t)$ implies a uniform Lipschitz bound of $\si(t)$ by $W_0^{1/2}$, as $\si(t)$ has constant speed and shorter than $\hat{\si}(t)$. Hence $\si(t)\in\La$ for some $L\in\N$, with $L$ larger than $N$ and $W_0^{1/2}$, so $\si(t)$ is a continuous path in $\La$. In the next lemma, we show that $\hat{\si}$ and $\si$ are homotopic.

\begin{lemma}\label{homotopy equivalence}
Given $\hat{\si}\in\Om$, and $\si_e$, $\ti{\si}_e$ as above, then $\hat{\si}$, $\si_e$ and $\ti{\si}_e$ are all homotopic in $\Om$.
\end{lemma}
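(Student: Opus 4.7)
Plan: I will build the homotopy in two stages---first deform $\hat{\si}$ to $\si_e$ one subinterval at a time, then reparametrize $\si_e$ to $\ti{\si}_e$. The second stage is routine. Writing $\ti{\si}_e(t)=\si_e(t)\circ P_t$ for the monotone piecewise linear reparametrization $P_t:I\to I$ fixing $\{0,1\}$, define
\begin{equation*}
K(r,t,x):=\si_e(t)\bigl((1-r)x + rP_t(x)\bigr),\qquad r\in[0,1].
\end{equation*}
Since $(1-r)\mathrm{id}+rP_t$ is again a monotone self-homeomorphism of $I$ fixing the endpoints, each $K(r,t,\cdot)\in\Om$; joint continuity in $(r,t)$ in the $W^{1,2}$ topology follows from the continuity of $\si_e$ (contained in the proof of Lemma \ref{continuity of Psi1}) together with the continuous dependence of $P_t$ on $\si_e(t)$.

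For $\hat{\si}\sim\si_e$ I work interval by interval using the partition $\{x_0,\ldots,x_N\}$ from \S \ref{sweepouts and Psi}. On each interior interval $[x_j,x_{j+1}]$, $j=1,\ldots,N-2$, the curves $\hat{\si}(t)$ and $\si_e(t)$ coincide at both endpoints, and both lie in a convex geodesic ball of $M$ (each piece has length at most $2$, while the injectivity radius is at least $8$ by (M2)). Pointwise exponential interpolation
\begin{equation*}
H_r(t)(x):=\exp_{\si_e(t)(x)}\Bigl((1-r)\exp^{-1}_{\si_e(t)(x)}\hat{\si}(t)(x)\Bigr)
\end{equation*}
is then well-defined, fixes both endpoints for every $r$, and connects $\hat{\si}(t)$ at $r=0$ to $\si_e(t)$ at $r=1$ continuously in $(r,t)$.

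The main obstacle lies in the boundary intervals $[0,x_1]$ and $[x_{N-1},1]$, where $\si_e(t)(0)=\pi_N(\hat{\si}(t)(x_1))$ generally differs from $\hat{\si}(t)(0)$. By the focal-radius bound (M2)$^{\pr}$, the radius-$4$ tubular neighborhood $U$ of $N$ supports the nearest-point projection $\pi_N:U\to N$ and a normal-exponential chart; both $\hat{\si}(t)|_{[0,x_1]}$ and $\si_e(t)|_{[0,x_1]}$ lie in $U$ by the length-$2$ bound. Writing $\hat{\si}(t)(x)=\exp^{\perp}_{q(x,t)}(v(x,t))$ with $q(x,t)=\pi_N(\hat{\si}(t)(x))\in N$ and $v(x,t)\in\nu_{q(x,t)}N$, we have $v(0,t)=0$ and $\si_e(t)(x)=\exp^{\perp}_{q(x_1,t)}(\tfrac{x}{x_1}v(x_1,t))$. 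The H\"{o}lder bound of \S \ref{sweepouts and Psi} keeps $q(x,t)$ uniformly close to $q(x_1,t)$ for $x\in[0,x_1]$, so there is a unique minimizing geodesic $\al_{t,x}:[0,1]\to N$ from $q(x,t)$ to $q(x_1,t)$. Set
\begin{equation*}
H^{\partial}_r(t)(x):=\exp^{\perp}_{\al_{t,x}(r)}\Bigl((1-r)\,P^{0\to r}_{t,x}\,v(x,t)+r\,P^{1\to r}_{t,x}\bigl(\tfrac{x}{x_1}v(x_1,t)\bigr)\Bigr),
\end{equation*}
where $P^{0\to r}_{t,x}$ and $P^{1\to r}_{t,x}$ denote parallel transport of normal vectors along $\al_{t,x}$. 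Then $H^{\partial}_0=\hat{\si}|_{[0,x_1]}$, $H^{\partial}_1=\si_e|_{[0,x_1]}$, $H^{\partial}_r(t)(0)=\al_{t,0}(r)\in N$, and $H^{\partial}_r(t)(x_1)=\hat{\si}(t)(x_1)$, since $\al_{t,x_1}$ is the constant geodesic at $q(x_1,t)$. The interval $[x_{N-1},1]$ is handled by the symmetric construction.

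The hardest technical step is verifying joint continuity of $H^{\partial}$ in $(r,t)$ in the $W^{1,2}$-topology; this reduces to smooth dependence of the tubular coordinates, parallel transport, and $\al_{t,x}$ on their inputs combined with $W^{1,2}$-continuity of $\hat{\si}$, along lines parallel to Lemma \ref{continuity of Psi1}. One must also check that at $t=0,1$ all intermediate curves reduce to the original constant map---this is automatic, since a constant $\hat{\si}(0,\cdot)\equiv p\in N$ forces $v\equiv 0$ and all $q$-values to equal $p$. Splicing $H^{\partial}$ on the two boundary intervals with $H_r$ on the interior intervals (the pieces match at break points, since both leave these points fixed) gives a continuous homotopy $\hat{\si}\sim\si_e$ in $\Om$; concatenating with $K$ yields the desired homotopy $\hat{\si}\sim\si_e\sim\ti{\si}_e$.
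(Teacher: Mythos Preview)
Your proof is correct, but it takes a genuinely different route from the paper's.

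The paper deforms $\hat{\si}$ to $\si_e$ by a \emph{growing replacement}: for $s\in[0,x_1]$ it sets
\[
F(t,x,s)=\begin{cases}H^{\pr}\big(\hat{\si}(t,s)\big)(x),&0\le x\le s,\\ \hat{\si}(t,x),&s\le x\le 1,\end{cases}
\]
so that as $s$ increases the geodesic-to-$N$ segment lengthens and swallows the original curve; on interior intervals the same idea is used with $H$ in place of $H^{\pr}$. Continuity in $\Om$ then comes for free from the properties (C2) and (C2)$^{\pr}$ already established for $\Psi$. Your argument instead does \emph{pointwise interpolation}: geodesic contraction $\exp_{\si_e(t,x)}\!\big((1-r)\exp^{-1}_{\si_e(t,x)}\hat{\si}(t,x)\big)$ on interior intervals, and on the boundary intervals a more elaborate construction in tubular coordinates of $N$ with parallel transport to keep the endpoint on $N$ throughout. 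Both strategies are valid; the paper's is more economical because it recycles $H$ and $H^{\pr}$ and needs no normal-bundle machinery, whereas yours imports extra geometry (Fermi coordinates, parallel transport of normal vectors) and incurs the technical debt you acknowledge in verifying $W^{1,2}$-continuity of $H^{\partial}$. Two small points worth tightening: (i) for the geodesic $\al_{t,x}$ in $N$ to be unique you need $d_N(q(x,t),q(x_1,t))$ below the injectivity radius of $N$, which follows only after possibly refining the partition (the paper's length bound $\le 2$ on each piece is not quite enough on its own); (ii) in your reparametrization homotopy you write $\ti{\si}_e(t)=\si_e(t)\circ P_t$, whereas the paper uses the inverse composition $\si_e=\ti{\si}_e\circ P_t$, which behaves better when some geodesic segment of $\si_e(t)$ degenerates to a point---your $P_t$ would have to jump there. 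Either issue is easily repaired.
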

\begin{proof}
First we show that $\hat{\si}$ is homotopic to $\si_e$ in $\Om$. Corresponding the $L$ break points\footnote{$L=N$ in the above case.}: $x_0=0, x_1, \cdots, x_L=1$, we deform $\hat{\si}$ to $\si_e$ in $L$ steps. For $s\in [0, x_1]$, define $F: [0, 1]\times I\times [0, x_1]\rightarrow M$, such that
\begin{displaymath}
F(t, x, s)=\left.\big\{ \begin{array}{ll}
H^{\pr}\big(\hat{\si}(t, s)\big)(x), \quad \textrm{if $0\leq x\le s$},\\
\hat{\si}(t, x), \quad\quad \textrm{if $s\leq x\leq 1$},
\end{array}\right.
\end{displaymath}
where $H^{\pr}$ is given by (\ref{definition of H prime})\footnote{Here the defining interval for $H^{\pr}(\cdot)$ is $[0, s]$, with $0\leq s\leq x_1$.}. (C2)$^{\pr}$ and Lemma \ref{boundary continuity in free boundary case} imply that $F$ is continuous, hence $F(\cdot, \cdot, s)$ lies in $\Om$ by the definition of $H^{\pr}$. So $F$ defines homotopy between $\hat{\si}(t, x)=F(t, x, 0)$ with $\si_1(t, x)=F(t, x, x_1)$ in $\Om$. Now we inductively define $F(t, x, s)$ as homotopy between $\si_j(t, x)$ and $\si_{j+1}(t, x)$ when $s\in [x_j, x_{j+1}]$, $1\leq j\leq L-2$. Assume that $\si_1, \cdots, \si_j$, and $F(\cdot, \cdot, s)$ are well-defined for $s\in[0, x_j]$, where $F(\cdot, \cdot, s): [x_l, x_{l+1}]\rightarrow\Om$ is a homotopy between $\si_l$ and $\si_{l+1}$, $1\leq l\leq j-1$. For $s\in[x_j, x_{j+1}]$, define $F: [0, 1]\times I\times [x_j, x_{j+1}]\rightarrow M$ by
\begin{displaymath}
F(t, x, s)=\left.\big\{ \begin{array}{ll}
H\big(\si_j(t, x_j), \si_j(t, s)\big)(x), \quad \textrm{if $x_j\leq x\le s$},\\
\si_j(t, x), \quad\quad \textrm{if $0\leq x\leq x_j$, or $s\leq x\leq 1$},
\end{array}\right.
\end{displaymath}
where $H$ is defined by (\ref{definition of H})\footnote{Here the defining interval for $H(\cdot, \cdot)$ is $[x_j, s]$, with $x_j\leq s\leq x_{j+1}$.}. (C2) implies that $F$ is continuous, so $F(\cdot, \cdot, s)\in\Om$, and $F$ defines a homotopy between $\si_j(t, x)=F(t, x, x_j)$ and $\si_{j+1}(t, x)=F(t, x, x_{j+1})$. Finally, using similar argument, we can use $H^{\pr}$ to define a homotopy between $\si_{L-1}$ with $\si_e$ by $F: [0, 1]\times I\times [x_{L-1}, 1]\rightarrow M$, with
\begin{displaymath}
F(t, x, s)=\left.\big\{ \begin{array}{ll}
H^{\pr}\big(\si_{L-1}(t, s)\big)(x), \quad \textrm{if $x_{L-1}+1-s \leq x\le 1$},\\
\si_{L-1}(t, x), \quad\quad \textrm{if $0\leq x\leq x_{L-1}+1-s$}.
\end{array}\right.
\end{displaymath}

Next we show that $\si_e$ is homotopic to $\ti{\si}_e$ in $\Om$. As $\ti{\si}_e(t, \cdot)$ is a reparametrization of $\si_e(t, \cdot)$, we can write $\si_e(t, \cdot)=\ti{\si}_e(t, \cdot)\circ P_t$, where $P_t: [0, 1]\rightarrow [0, 1]$ is a monotone piecewise linear map. By the proof of Lemma \ref{continuity of Psi1}, $P_t$ depends continuously on $``t"$\footnote{$P_t |_{[x_{2j}, x_{2j+2}]}=(P_j(t))^{-1}$ is a linear map from $[x_{2j}, x_{2j+2}]$ to $I_j(t)=[\frac{1}{S(t)}\sum_{l<j}a_l(t), \frac{1}{S(t)}\sum_{l\leq j}a_l(t)]$ using notations in the proof of Lemma \ref{continuity of Psi1}.}. Then
$$G(t, x, s)=\ti{\si}_e\big(t, (1-s)P_t(x)+sx\big),$$
is a homotopy between $G(\cdot, \cdot, 0)=\si_e$ and $G(\cdot, \cdot, 1)=\ti{\si}_e$. When $\Length(\ga_e(t, \cdot))=0$, we can let $P_t=id$, and $G$ is well-defined.
\end{proof}

\begin{remark}\label{remark of homotopy equivalence}
Applying Step 3 and Step 4 of $\Psi$ in \S \ref{defining Psi} to $\ti{\si}_e(t)$ gives $\si_o(t)$ and $\ti{\si}_o(t)$, with $\ti{\si}_o(t)\in\La$ for each $t$. $\si_o(t)$ an $\ti{\si}_o(t)$ are both continuous mapping from $[0, 1]$ to $W^{1, 2}(I, M)$ by Property (1) of $\Psi$, and it is easily seen that $\ti{\si}_o(t)\in\Om$. The homotopy equivalence of $\ti{\si}_e$, $\si_o$ and $\ti{\si}_o$ in $\Om$ follows from an easy modification of the above proof, as there is no boundary replacement here.
\end{remark}


\subsection{Almost maximal implies almost critical.}

We prove Theorem \ref{main theorem2} in this section.

\begin{proof}
(Theorem \ref{main theorem2}) Take a sequence $\{\hat{\si}_j\}\subset [\si_0]$, such that
$$\max_{t\in [0, 1]}E\big(\hat{\si}_j(t)\big)\leq W_0+\frac{1}{j}.$$
By the discussion in \S \ref{sweepouts and Psi}, we can deform $\hat{\si}_j$ to $\si_j$, such that $E(\si_j(t))\leq E(\hat{\si}_j(t))$, $\si_j(t)$ is a continuous path in $\La$, and $\si_j(t)$ is homotopic to $\hat{\si}_j(t)$ in $\Om$ by Lemma \ref{homotopy equivalence}, hence $\si_j(t)\in[\si_0]$. So $\max_{t\in [0, 1]}E\big(\si_j(t)\big)\leq \max_{t\in [0, 1]}E\big(\hat{\si}_j(t)\big)\leq W_0+\frac{1}{j}$.

Now apply the modified Birkhoff curve shortening map to each $\si_j(t)$ to get $\ga_j(t)=\Psi(\si_j(t))\in\La$, hence $\ga_j(t)$ is homotopic to $\si_j(t)$ in $\Om$ by Lemma \ref{homotopy equivalence} and Remark \ref{remark of homotopy equivalence}, so $\ga_j\in[\si_0]$. As $E(\ga_j(t))=\Length^2(\ga_j(t))\leq \Length^2(\si_j(t))=E(\si_j(t))$,
$$\max_{t\in [0, 1]}E\big(\ga_j(t)\big)\leq W_0+\frac{1}{j}.$$
So $lim_{j\rightarrow\infty}\max_{t\in [0, 1]}E(\ga_j(t))=W_0=W([\si_0])$.

We will show that $\{\ga_j\}$ satisfies the requirement of Theorem \ref{main theorem2}. If not, then there exist an $\epsilon>0$, a sequence $\de_i>0$, with $\lim_{i\rightarrow\infty}\de_i=0$, a subsequence $\ga_{j_i}$, with $j_i>\frac{1}{\de_i}$, and a sequence of $t_i\in[0, 1]$, with $E(\ga_{j_i}(t_i))=\Length^2(\ga_{j_i}(t_i))>W_0-\de_i$, but $dist(\ga_{j_i}(t_i), G)\geq \ep$. Also, as $E(\ga_j(t_i))\leq E(\si_j(t_i))\leq W_0+\frac{1}{j_i}\leq W_0+\de_i$, we have
$$W_0-\de_i\leq \Length^2\big(\ga_{j_i}(t_i)\big)\leq \Length^2\big(\si_{j_i}(t_i)\big)\leq W_0+\de_i.$$
Now denote $\ga_i=\ga_{j_i}(t_i)$ and $\si_i=\si_{j_i}(t_i)$. Since $W_0>0$, then $Length(\ga_i)\geq\frac{\ep}{2}$ for $i$ large enough. Then Property (3) of $\Psi$ implies that $dist(\ga_i, \si_i)\leq \frac{\ep}{2}$ for $i$ large enough, as $\ga_i=\Psi(\si_i)$, hence
$$dist(\si_i, G)\geq dist(\ga_i, G)-dist(\ga_i, \si_i)\geq \frac{\ep}{2}.$$
But this is a contradiction to Property (5) of $\Psi$, as $\Length(\si_i)-\Length(\ga_i)\rightarrow 0$ when $i\rightarrow\infty$.
\end{proof}



\parindent 0ex
Massachusetts Institute of Technology\\
Department of Mathematics\\
77 Massachusetts Avenue\\
Cambridge, MA 02139-4307\\
Email address: xinzhou@math.mit.edu

\end{document}